\tikzstyle{block} = [draw, fill=gray!20, rectangle, 
\tikzstyle{sum} = [draw, fill=gray!20, circle, node distance=1.5cm]
\tikzstyle{input} = [coordinate]
\tikzstyle{output} = [coordinate]
\tikzstyle{pinstyle} = [pin edge={to-,thin,black}]
\DeclareMathOperator{\rank}{rank}
\DeclareMathOperator{\tr}{tr}
\DeclareMathOperator{\im}{im}
\newcommand{\bbma}{\begin{bmatrix}}
\newcommand{\ebma}{\end{bmatrix}}
\newcommand{\beq}{\begin{equation}}
\newcommand{\eeq}{\end{equation}}
\newcommand{\R}{\mathbb{R}}
\newcommand{\mac}[1]{\mathcal{#1}}
\newtheorem{theorem}{Theorem}
\newtheorem{lemma}[theorem]{Lemma}
\newtheorem{example}[theorem]{Example}
\newtheorem{proposition}[theorem]{Proposition}
\newtheorem{remark}[theorem]{Remark}
\newtheorem{definition}[theorem]{Definition}
\def\BibTeX{{\rm B\kern-.05em{\sc i\kern-.025em b}\kern-.08em
    T\kern-.1667em\lower.7ex\hbox{E}\kern-.125emX}}
\begin{document}
\title{Fragility Analysis of Data-Driven Feedback Gains}
\author{Yongzhang Li, Amir Shakouri, M. Kanat Camlibel
\thanks{The work of Yongzhang Li was supported by the China Scholarship Council under Grant 202106070023.}
\thanks{The authors are with the Bernoulli Institute for Mathematics, Computer Science and Artificial Intelligence, University of Groningen, The Netherlands (e-mail: yongzhang.li@rug.nl, a.shakouri@rug.nl, m.k.camlibel@rug.nl).}
}

\maketitle
\thispagestyle{empty}
\pagestyle{empty}

\begin{abstract}
For linear time-invariant systems, input-state data collected during an open-loop experiment can remedy the lack of knowledge of system parameters. However, such data do not contain information about other system uncertainties such as feedback perturbations. In this paper, we study the effect of additive perturbations on control parameters in a data-based setting.  To this end, we parameterize the set of quadratically stabilizing feedback gains obtained from noisy input-state data. We study the case where a stabilizing data-driven feedback gain is extremely sensitive to feedback perturbations, i.e., a small perturbation in the control parameters, no matter how small, could destabilize the unknown true system. We refer to this case as extreme \emph{fragility} for which we provide a full characterization. We also present necessary and sufficient conditions for the case where the closed-loop system is completely \emph{immune} to feedback perturbations. For the general case where the feedback gain is neither extremely fragile nor immune, we provide a measure by which one can quantize the control fragility directly based on the collected data. We also study the problem of designing the least fragile data-driven feedback gain. The results are presented either in closed-form, or in terms of linear matrix inequalities and semi-definite programs. 
\end{abstract}

\begin{IEEEkeywords}
Data-driven stabilization, feedback perturbation, fragility, input-state data, process noise. 
\end{IEEEkeywords}

\section{Introduction}
\label{sec:int}

\IEEEPARstart{D}{ata-driven} stabilization aims at designing a feedback law directly based on data collected from an \emph{unknown system}, bypassing a system identification process \cite{van2020data,HenkSlemma,van2023informativity} (see \cite[Ch.~1.2]{DBLSCT2025} for a historical background). Under suitable conditions on the data, such feedback laws can be obtained, and if implemented \emph{exactly} as designed, they stabilize the unknown system. Nevertheless, data collected in an open-loop scenario do not contain information about perturbations arising from the feedback loop. As a result, a data-driven feedback that is unaware of such perturbations might be \emph{fragile}.  In this work, we study data-driven stabilization in the presence of perturbations on the controller parameters and the extent to which the stability of the unknown system is immune to such perturbations. 

From design to real-world implementation, a feedback gain might undergo various perturbations due to, e.g., truncation errors in the numerical computations, limited word-length implementation, conversions from analog to digital and vice versa, and instrumental precision. Some feedback perturbations caused by, e.g., faults in the sensors and actuators, can also be modeled by perturbations on the feedback gain. In addition, it is appealing for a designed feedback to provide room for future adjustments caused by a change in the design objectives. Hence, it is important to design a feedback that is not extremely sensitive to variations in the control parameters, and to that end, it is useful to know the tolerance towards feedback perturbations.

The problem of fragile controllers was raised in the paper by Keel and Bhattacharyya \cite{keel1997robust} (see also \cite{makila1998comments,dorato1998non} and the references therein). They showed that a feedback gain obtained through $H_\infty$, $H_2$, $l_1$, or $\mu$ formulations can be fragile, which means that the stability of the closed-loop system is highly sensitive to variations in the control parameters. Since then, several design methods have been proposed to prevent fragile controllers, including the following works: Control fragility caused by fixed word-length implementations is studied in \cite{paattilammi2000fragility}, for which a loop-shaping method is introduced as a solution. Nonfragile controllers with linear-quadratic performance indices are studied in \cite{famularo2000robust}, where it is observed that for some structured perturbations the problem can be tackled by convex optimization. The use of fixed-structure controllers to avoid fragility is considered in \cite{keel1999robust,haddad2000robust}. Nonfragile design through minimization of pole
sensitivity is proposed in \cite{whidborne2001reduction} and the ellipsoidal sets of nonfragile feedback gains are studied in \cite{peaucelle2005ellipsoidal}. While most of the literature on nonfragile control is devoted to additive perturbations on the feedback gain, multiplicative perturbations are also studied, e.g, in \cite{yang2000guaranteed,yang2001non}. Moreover, nonfragile filter design has also been a topic of research in the literature \cite{yang2001robust,yang2008non}. 

For unknown linear time-invariant (LTI) systems, a stabilizing state-feedback gain can be directly obtained from a collection of noisy input-state data. Given that the noise belongs to a known deterministic model, the data give rise to the set of 
\emph{data-consistent systems}, which includes all systems that could have generated the available data for some noise sequence agreeing with the noise model. In this setting, since any data-consistent system can potentially be the unknown true system, one may seek a feedback gain to stabilize all data-consistent systems. This problem has already been studied within the framework of \emph{data informativity}, for which solutions are presented in, e.g., \cite{van2020data,HenkSlemma}. To the authors' knowledge, however, the fragility issues of such data-driven feedback design methods have not been addressed in the literature. 

In this work, we answer the following question: \emph{Given a state-feedback gain that stabilizes all data-consistent systems, to what extent such a guarantee is intact when the gain is perturbed?} We study the effect of additive perturbations on data-driven feedback gains. To that end, we parameterize all quadratically stabilizing feedback gains, for which we leverage the recently developed tools on quadratic matrix inequalities (QMIs) in \cite{HenkQMI2023,shakouri2024system}. We call a data-driven feedback gain extremely fragile if a variation in the feedback gain, no matter how small in magnitude, destabilizes a subset of the data-consistent systems. Two extreme cases where the data-driven feedback gain is extremely fragile or is completely immune to perturbations are isolated by necessary and sufficient conditions. Next, we study the general case where the data-driven feedback is neither fragile nor immune, for which we characterize the set of perturbations that leave the stability guarantee intact. For this, we introduce a measure that quantifies the fragility of a feedback gain. We show that one can compute this measure and find the least fragile feedback gain by solving a semi-definite program (SDP). For the sake of completeness and better understanding of the introduced concepts, we first study the underlying problems in the model-based setting, and then extend our study to the data-driven framework. 

This note is organized as follows: Section~\ref{sec:II} provides a recap of the data-driven stabilization theory. In Section~\ref{sec:III}, we present a parametrization for the set of data-driven stabilizing feedback gains. In Section~\ref{sec:IV}, we study the fragility of feedback gains within model-based and data-driven settings. Finally, Section~\ref{sec:V} concludes the paper. 

\subsubsection*{Notation}

The set of $n\times n$ real symmetric matrices is denoted by $\mathbb{S}^n$. We say matrix $M$ is positive definite (resp., positive semi-definite), and we denote it by $M>0$ (resp., $M\geq 0$), if $M\in\mathbb{S}^{n}$ and all its eigenvalues are positive (resp., nonnegative). We say matrix $M$ is negative definite (resp., negative semi-definite), and we denote it by $M<0$ (resp., $M\leq 0$), if $-M>0$ (resp., \mbox{$-M\geq 0$}). We denote the spectral norm of $M \in \mathbb{C}^{p \times q}$ by $\|M\|$. Given \mbox{$C\in\mathbb{R}^{p\times q}$} and $\rho\geq 0$, let $\mathcal{B}(C,\rho)\coloneqq\{X:\|X-C\|\leq\rho\}$ denote the ball centered around $C$ with radius $\rho$. For a matrix $M\in\mathbb{R}^{p\times q}$, its Moore–Penrose pseudoinverse is denoted by $M^\dagger$ and its kernel is denoted by $\ker M \coloneqq \{x \in \R^q:Mx=0\}$. A square matrix is called \emph{Schur} if all its eigenvalues lie inside the open unit disc in the complex plane. We define the set 
\begin{equation}
\begin{split}
\pmb{\Pi}_{q,r}\coloneqq \bigg\{\begin{bmatrix}
\Pi_{11} & \Pi_{12} \\
\Pi_{21} & \Pi_{22}
\end{bmatrix}\in\mathbb{S}^{q+r}: \Pi_{11}\in\mathbb{S}^q,\Pi_{22}\in\mathbb{S}^r,\\
\Pi_{22}\leq 0,\Pi|\Pi_{22}\geq 0,\ker\Pi_{22}\subseteq\ker\Pi_{12}\bigg\},
\end{split}
\end{equation}
where $\Pi|\Pi_{22}\coloneqq \Pi_{11}-\Pi_{12} \Pi_{22}^{\dagger} \Pi_{21}$ is the generalized Schur complement of $\Pi$ with respect to $\Pi_{22}$. We also define the QMI-induced sets $\mathcal{Z}_r(\Pi)$ and $\mathcal{Z}^+_r(\Pi)$ as follows:
\begin{equation}
\label{eq:Zp}
\begin{split}
\mathcal{Z}_r(\Pi)\coloneqq \left\{Z\in\mathbb{R}^{r\times q}:\begin{bmatrix}
I \\ Z
\end{bmatrix}^\top \Pi \begin{bmatrix}
I \\ Z
\end{bmatrix}\geq 0\right\},  \\
\mathcal{Z}^+_r(\Pi)\coloneqq \left\{Z\in\mathbb{R}^{r\times q}:\begin{bmatrix}
I \\ Z
\end{bmatrix}^\top \Pi \begin{bmatrix}
I \\ Z
\end{bmatrix} > 0\right\}.
\end{split}
\end{equation}

\section{Recap of Data-driven Stabilization}
\label{sec:II}
In this section, we briefly review data-driven stabilization results within the \emph{data informativity} framework \cite{van2020data,HenkSlemma,van2023informativity,DBLSCT2025} based on the existing literature. 

Consider an LTI system of the form
\beq
  x(t+1)=A_\text{true}x(t)+B_\text{true}u(t)+w(t),
  \label{dy-lsys}
\eeq
where $x(t)\in\R^n$ is the state, $u(t) \in \R^m$ is the input, and $w(t)\in\R^n$ is the process noise. We assume that $A_\text{true}$ and $B_\text{true}$ are unknown, but we can access input-state data
\begin{equation}
\label{eq:data}
\mathcal{D}\coloneqq\left(\begin{bmatrix}
    u(0)  & \cdots & u(T-1)
\end{bmatrix}, \begin{bmatrix}
    x(0) & \cdots & x(T)
\end{bmatrix}\right)
\end{equation} 
collected from system \eqref{dy-lsys}. These data are influenced by the unknown process noise
\begin{equation*}
W_-\coloneqq\begin{bmatrix}
w(0) & \cdots & w(T-1)
\end{bmatrix}.
\end{equation*}
We assume that the noise signal satisfies an energy bound of the form
\begin{equation}
\label{eq:ass1-1}
W_-^\top\in\mathcal{Z}_T(\Phi),
\end{equation} 
where $\Phi\in\pmb{\Pi}_{n,T}$ with $\Phi_{22}<0$ is given.

Several noise models can be captured by this energy bound (see \cite[p. 4]{HenkQMI2023} for a detailed account), among which the noise-free case corresponds to $\Phi_{11}=0$, $\Phi_{12}=\Phi_{21}^\top=0$, and $\Phi_{22}=-I$. 

We define the data matrices
\begin{equation}
\begin{split}
U_-&\coloneqq\bbma u(0) & u(1) & \cdots & u(T-1) \ebma,  \\
X_-&\coloneqq\bbma x(0) & x(1) & \cdots & x(T-1) \ebma, \\
X_+&\coloneqq\bbma x(1) & x(2) & \cdots & x(T) \ebma.
\end{split}
\end{equation}
A pair of real matrices $(A,B)$ is called a \emph{data-consistent system} if it satisfies
\begin{equation}
X_+=AX_-+BU_-+W_-
\end{equation}
for some $W_-$ satisfying \eqref{eq:ass1-1}. We define the set of all data-consistent systems as
\begin{equation}
\label{eq:8}
\Sigma_\mathcal{D}\coloneqq \{(A,B):(X_+-AX_--BU_-)^\top\in \mathcal{Z}_{T}(\Phi)\}.
\end{equation}
Clearly, $(A_\textup{true},B_\textup{true}) \in \Sigma_\mathcal{D}$. One can verify that (see \cite[Lem. 4]{HenkSlemma})
\begin{equation}
\label{eq:explaining_QMI_induced}
\Sigma_\mathcal{D}=\left\{(A,B): \begin{bmatrix}
A & B
\end{bmatrix}^\top\in\mathcal{Z}_{n+m}(N)\right\},
\end{equation}
where
\begin{equation}
N\coloneqq\bbma N_{11} & N_{12} & N_{13} \\ N_{21} & N_{22} & N_{23} \\ N_{31} & N_{32} & N_{33} \ebma = \bbma I & X_+ \\ 0 & -X_- \\ 0 & -U_- \ebma \Phi \bbma I & X_+ \\ 0 & -X_- \\ 0 & -U_- \ebma^\top,
\label{eq:def_N}
\end{equation} 
$N_{11},N_{22} \in \mathbb{S}^n$, and $N_{33} \in \mathbb{S}^m$. The following proposition summarizes some facts about $\Sigma_\mathcal{D}$. 
\begin{proposition}[{\cite[Prop. 14]{shakouri2024system}}]
\label{prop:Sigma_properties}
Let $\Phi\in\pmb{\Pi}_{n,T}$ with $\Phi_{22}<0$. Then, the following statements hold:
\begin{enumerate}[label=(\alph*),ref=\ref{prop:Sigma_properties}(\alph*)]
    \item\label{prop:Sigma_properties(b)} $\Sigma_\mathcal{D}$ is bounded \emph{if and only if} $\rank\begin{bmatrix}
X_-^\top & U_-^\top
\end{bmatrix}=n+m$.    \item\label{prop:Sigma_properties(c)} Assume that $\Sigma_\mathcal{D}$ is bounded. Then, $\Sigma_\mathcal{D}$ is a singleton \emph{if and only if} \begin{equation}
    \label{eq:N_schur}
    N_{11}-\begin{bmatrix}
    N_{21} \\ N_{31}
    \end{bmatrix}^\top\begin{bmatrix}
    N_{22} & N_{23} \\
    N_{32} & N_{33}
    \end{bmatrix}^{-1}\begin{bmatrix}
    N_{21} \\ N_{31}
    \end{bmatrix}=0.
    \end{equation}
    In this case, $\Sigma_\mathcal{D}=\{(A_{\text{true}},B_{\text{true}})\}$ and
    \begin{equation}
    \begin{bmatrix} A_{\text{true}} & B_{\text{true}} \end{bmatrix}=-\begin{bmatrix}
    N_{21} \\ N_{31}
    \end{bmatrix}^\top\begin{bmatrix}
    N_{22} & N_{23} \\
    N_{32} & N_{33}
    \end{bmatrix}^{-1}.
    \end{equation}
\end{enumerate}
\end{proposition}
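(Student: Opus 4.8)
The plan is to reduce both claims to structural properties of the QMI solution set $\mac{Z}_{n+m}(N)$, using the identification \eqref{eq:explaining_QMI_induced} which writes each data-consistent system as $Z\coloneqq\bbma A & B\ebma^\top\in\mac{Z}_{n+m}(N)$. Throughout I partition $N$ in the style of $\pmb{\Pi}_{n,n+m}$, setting $\Pi_{11}=N_{11}$, $\Pi_{12}=\bbma N_{12} & N_{13}\ebma$, and $\Pi_{22}=\bbma N_{22} & N_{23}\\ N_{32}&N_{33}\ebma$. The first step, common to both parts, is to expand the definition \eqref{eq:def_N} of $N$ and record the explicit forms $\Pi_{22}=\bbma X_-\\ U_-\ebma\Phi_{22}\bbma X_-\\ U_-\ebma^\top$ and $\Pi_{12}=-(\Phi_{12}+X_+\Phi_{22})\bbma X_-^\top & U_-^\top\ebma$. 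Since $\Phi_{22}<0$, the first identity gives $\Pi_{22}\le 0$ with $\ker\Pi_{22}=\ker\bbma X_-^\top & U_-^\top\ebma$, and the second then yields $\ker\Pi_{22}\subseteq\ker\Pi_{12}$; together with nonemptiness (which holds because $(A_\textup{true},B_\textup{true})\in\Sigma_\mathcal{D}$) this confirms $N\in\pmb{\Pi}_{n,n+m}$ and justifies the manipulations below.

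For the boundedness claim, I would show that $\Sigma_\mathcal{D}$, equivalently $\mac{Z}_{n+m}(N)$, is bounded if and only if $\Pi_{22}<0$, and then translate $\Pi_{22}<0$ into the rank condition. Writing $\xi=\bbma X_-^\top & U_-^\top\ebma v$, the identity for $\Pi_{22}$ and $\Phi_{22}<0$ give $v^\top\Pi_{22}v=\xi^\top\Phi_{22}\xi<0$ for every $v\ne 0$ exactly when $\xi\ne 0$, so $\Pi_{22}<0$ iff $\bbma X_-^\top & U_-^\top\ebma$ has trivial kernel, i.e. $\rank\bbma X_-^\top & U_-^\top\ebma=n+m$. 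For the boundedness equivalence itself: if $\Pi_{22}<0$, completing the square in $\Pi_{11}+\Pi_{12}Z+Z^\top\Pi_{21}+Z^\top\Pi_{22}Z\ge 0$ confines $Z$ to a bounded set, since testing the resulting matrix inequality against unit vectors bounds $\|Z+\Pi_{22}^{-1}\Pi_{21}\|$ in terms of $\lambda_{\min}(-\Pi_{22})$ and $\Pi|\Pi_{22}$; conversely, if $\Pi_{22}$ is singular, I pick $0\ne v\in\ker\Pi_{22}$, note $v\in\ker\Pi_{12}$ as well, and observe that for any $Z\in\mac{Z}_{n+m}(N)$ and any $\eta$ the perturbation $Z+v\eta^\top$ leaves the QMI value unchanged, producing an unbounded family.

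For the singleton claim, assume $\Sigma_\mathcal{D}$ is bounded, so $\Pi_{22}<0$ and the generalized Schur complement collapses to $\Pi|\Pi_{22}=N_{11}-\Pi_{12}\Pi_{22}^{-1}\Pi_{21}$. Completing the square again, the QMI is equivalent to $(Z-Z_0)^\top(-\Pi_{22})(Z-Z_0)\le\Pi|\Pi_{22}$ with $Z_0\coloneqq-\Pi_{22}^{-1}\Pi_{21}$, and nonemptiness forces $\Pi|\Pi_{22}\ge 0$ upon taking $Z=Z_0$. If $\Pi|\Pi_{22}=0$, then $-\Pi_{22}>0$ forces $Z=Z_0$, so the set is the singleton $\{Z_0\}$; transposing $Z_0$ gives $\bbma A & B\ebma=-\Pi_{21}^\top\Pi_{22}^{-1}$, which is the claimed formula, and since $(A_\textup{true},B_\textup{true})\in\Sigma_\mathcal{D}$ it must coincide with this point. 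Conversely, if $\Pi|\Pi_{22}\ge 0$ is nonzero, I choose a unit eigenvector $u$ of $\Pi|\Pi_{22}$ with eigenvalue $\lambda>0$ and a nonzero $y$ with $y^\top(-\Pi_{22})y\le\lambda$; then $Z_0+yu^\top\ne Z_0$ still satisfies the inequality because $\Pi|\Pi_{22}-(y^\top(-\Pi_{22})y)uu^\top\ge 0$, so the set is not a singleton. Hence the set is a singleton iff $\Pi|\Pi_{22}=0$, which is precisely \eqref{eq:N_schur}.

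The main obstacle is the boundedness equivalence, and within it the unboundedness direction: it hinges on the structural fact $\ker\Pi_{22}\subseteq\ker\Pi_{12}$, which guarantees that perturbing $Z$ along $\ker\Pi_{22}$ does not alter the quadratic form. Establishing $N\in\pmb{\Pi}_{n,n+m}$ at the outset is what makes this step clean. The secondary care point is the explicit construction of a distinct solution in the singleton claim, where one must verify the matrix inequality $(y^\top(-\Pi_{22})y)\,uu^\top\le\Pi|\Pi_{22}$; aligning $u$ with an eigenvector of $\Pi|\Pi_{22}$ reduces this to the scalar condition $y^\top(-\Pi_{22})y\le\lambda$, which is easily met by scaling $y$.
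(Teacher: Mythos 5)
Your proposal is correct, and it follows essentially the route of the literature the paper relies on: the paper itself states this proposition without proof (importing it from \cite{shakouri2024system}), and your argument---identifying $\Sigma_\mathcal{D}$ with $\mathcal{Z}_{n+m}(N)$, computing $\Pi_{22}=\begin{bmatrix} X_-^\top & U_-^\top\end{bmatrix}^\top\!\Phi_{22}\begin{bmatrix} X_-^\top & U_-^\top\end{bmatrix}$ and $\Pi_{12}=-(\Phi_{12}+X_+\Phi_{22})\begin{bmatrix} X_-^\top & U_-^\top\end{bmatrix}$ to place $N\in\pmb{\Pi}_{n,n+m}$, and completing the square to get boundedness iff $\Pi_{22}<0$ and singleton iff $\Pi|\Pi_{22}=0$---is exactly the QMI-geometric machinery of \cite{HenkQMI2023,shakouri2024system}, only with the auxiliary facts re-derived self-containedly instead of cited. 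All the delicate points (invariance of the quadratic form along $\ker\Pi_{22}$ via $\ker\Pi_{22}\subseteq\ker\Pi_{12}$, nonemptiness supplied by $(A_\textup{true},B_\textup{true})\in\Sigma_\mathcal{D}$, and the eigenvector construction verifying $(y^\top(-\Pi_{22})y)\,uu^\top\leq\Pi|\Pi_{22}$) are handled correctly.
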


We consider the following notions of data informativity. 
\begin{definition}[\cite{DBLSCT2025}]
\label{def:1}
The data $\mathcal{D}$ are called 
\begin{enumerate}[label=(\alph*),ref=\ref{def:1}(\alph*)]
    \item\label{def:1(a)} \emph{informative for stabilization} if there exists $K\in\mathbb{R}^{m\times n}$ such that $A+BK$ is Schur for all $(A,B)\in\Sigma_\mathcal{D}$.
    \item\label{def:1(b)} \emph{informative for quadratic stabilization} if there exist \mbox{$K\in\mathbb{R}^{m\times n}$} and $P>0$ such that 
    \begin{equation}
    \label{eq:Lyap}
    P-(A+BK) P(A+BK)^\top>0
    \end{equation}
    for all $(A,B)\in\Sigma_\mathcal{D}$.
\end{enumerate}
\end{definition}

Under suitable conditions, the two notions of data informativity in Definition~\ref{def:1} are equivalent \cite{van2022data}. One such condition corresponds to the noise-free case.

\begin{proposition}[{\cite[Thm. 1]{van2022data}}]
Suppose that $\Phi_{11}=0$, $\Phi_{22}=-I$, and $\Phi_{12}=\Phi_{21}^\top=0$. Then, the data $\mathcal{D}$ are informative for stabilization \emph{if and only if} they are so for quadratic stabilization. 
\end{proposition}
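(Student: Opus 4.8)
The plan is to prove the two implications separately; the implication from quadratic stabilization to stabilization is immediate and holds for \emph{any} noise model, so the whole content lies in the converse, where I would crucially use the noise-free hypothesis. For the easy direction, if $K$ and $P>0$ satisfy $P-(A+BK)P(A+BK)^\top>0$ for all $(A,B)\in\Sigma_\mathcal{D}$, then fixing any such pair and writing $M=A+BK$, the inequality $P-MPM^\top>0$ with $P>0$ is a discrete-time Lyapunov certificate, so $M$ is Schur. Hence the same $K$ renders every $(A,B)\in\Sigma_\mathcal{D}$ Schur, which is exactly informativity for stabilization in the sense of Definition~\ref{def:1(a)}.

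For the converse I would first record what the noise model gives. With $\Phi_{11}=0$, $\Phi_{12}=\Phi_{21}^\top=0$ and $\Phi_{22}=-I$, the bound $W_-^\top\in\mathcal{Z}_T(\Phi)$ reduces to $-W_-W_-^\top\geq0$, forcing $W_-=0$. Consequently $\Sigma_\mathcal{D}=\{(A,B):\begin{bmatrix}A & B\end{bmatrix}\begin{bmatrix}X_-\\U_-\end{bmatrix}=X_+\}$ is an \emph{affine} set, nonempty since it contains $(A_\textup{true},B_\textup{true})$. Now suppose $K$ is stabilizing. As $(A,B)\mapsto A+BK=\begin{bmatrix}A & B\end{bmatrix}\begin{bmatrix}I\\K\end{bmatrix}$ is affine, the set of closed-loop matrices is an affine subspace $\mathcal{C}=M_0+\mathcal{L}$, where $M_0=A_\textup{true}+B_\textup{true}K$ and $\mathcal{L}=\{\Delta\begin{bmatrix}I\\K\end{bmatrix}:\Delta\begin{bmatrix}X_-\\U_-\end{bmatrix}=0\}$. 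The goal reduces to showing $\mathcal{L}=\{0\}$, for then $A+BK$ equals the single matrix $M_0$ for \emph{every} data-consistent pair.

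The step $\mathcal{L}=\{0\}$ is the main obstacle, and more delicate than it appears: it is \emph{not} true in general that an all-Schur affine subspace of matrices must be a single point (every strictly upper-triangular matrix is nilpotent, hence Schur). What saves the argument is the special structure of $\mathcal{L}$. Taking the columns of $V$ to be a basis of $(\im\begin{bmatrix}X_-\\U_-\end{bmatrix})^\perp$, the admissible $\Delta$ are exactly $\Delta=\Gamma V^\top$, whence $\mathcal{L}=\{\Gamma W:\Gamma\in\R^{n\times r}\}$ with $W\coloneqq V^\top\begin{bmatrix}I\\K\end{bmatrix}$ fixed. I would then show that if $W\neq0$ this set contains matrices of arbitrarily large spectral radius: choosing a column $e_j$ with $y\coloneqq We_j\neq0$ and setting $\Gamma\coloneqq t\,e_j y^\top/\|y\|^2$ gives $\tr(\Gamma W)=t$, so that $\tr(M_0+\Gamma W)=\tr(M_0)+t\to\infty$ and $M_0+\Gamma W\in\mathcal{C}$ cannot be Schur for large $t$. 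This contradiction forces $W=0$, i.e.\ $\im\begin{bmatrix}I\\K\end{bmatrix}\subseteq\im\begin{bmatrix}X_-\\U_-\end{bmatrix}$, and hence $\mathcal{L}=\{0\}$.

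Finally, $W=0$ yields a matrix $G$ with $X_-G=I$ and $U_-G=K$, so that for \emph{every} $(A,B)\in\Sigma_\mathcal{D}$,
\begin{equation}
A+BK=\begin{bmatrix}A & B\end{bmatrix}\begin{bmatrix}X_-\\U_-\end{bmatrix}G=X_+G,
\end{equation}
a single matrix that is Schur because every (indeed some) data-consistent closed loop is. To close, I would invoke the converse Lyapunov fact that a Schur matrix admits $P>0$ with $P-(X_+G)P(X_+G)^\top>0$; since $A+BK=X_+G$ for all data-consistent systems, this $P$ together with $K$ certifies informativity for quadratic stabilization in the sense of Definition~\ref{def:1(b)}, completing the proof.
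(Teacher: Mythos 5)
Your proof is correct, and it is worth noting that the paper itself gives no proof of this proposition: it is imported verbatim from \cite[Thm.~1]{van2022data}, so the only meaningful comparison is with the argument in the cited literature. There your proof is essentially a self-contained reconstruction of the standard one. The reduction is the same: with $\Phi_{11}=0$, $\Phi_{12}=\Phi_{21}^\top=0$, $\Phi_{22}=-I$, the QMI forces $W_-=0$, so $\Sigma_\mathcal{D}$ is the affine solution set of $AX_-+BU_-=X_+$, and the crux is showing that a gain $K$ stabilizing all of $\Sigma_\mathcal{D}$ must satisfy $\Delta\begin{bmatrix}I\\ K\end{bmatrix}=0$ for every $\Delta$ with $\Delta\begin{bmatrix}X_-\\ U_-\end{bmatrix}=0$ --- this is exactly \cite[Lem.~15]{van2020data}, which the present paper invokes without proof in establishing Theorem~\ref{prop:mu^a_bounded}(a). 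Your trace argument re-proves that lemma, and you correctly identified the one genuinely delicate point: an affine family of Schur matrices need not be a singleton (strictly upper-triangular perturbations are the standard counterexample), and what rescues the claim is that the set of admissible perturbations $\{\Gamma V^\top\}$ is closed under arbitrary left multiplication, giving you the freedom to push the trace past $n$. The literature's proof exploits the same left-module structure, via unbounded spectral radius along admissible directions rather than your trace normalization $\Gamma=t\,e_jy^\top/\|y\|^2$; your version is marginally more elementary since it needs only $|\tr M|<n$ for Schur $M$. The endgame --- $\begin{bmatrix}I\\ K\end{bmatrix}=\begin{bmatrix}X_-\\ U_-\end{bmatrix}G$, hence $A+BK=X_+G$ constant over $\Sigma_\mathcal{D}$, followed by the converse Lyapunov theorem to produce $P>0$ --- likewise matches the known route (cf.\ \cite[Thm.~16]{van2020data}). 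One cosmetic remark: in the easy direction you should note that $P-(A+BK)P(A+BK)^\top>0$ certifies that $(A+BK)^\top$, and hence $A+BK$, is Schur; your eigenvector argument is implicit but worth a line since Definition~\ref{def:1(b)} uses the dual (Gramian-type) inequality.
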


The following proposition provides a necessary and sufficient LMI condition for the informativity of data $\mac{D}$ for quadratic stabilization. 

\begin{proposition}[{\cite[Thm. 5.1(b)]{HenkQMI2023}}]
\label{prop:DDC}
 Suppose that $\bbma X_-^\top & U_-^\top \ebma$ has full column rank. Then, the data $\mac{D}$ are informative for quadratic stabilization \emph{if and only if} there exist $P>0$, $\alpha\geq 0$, and $L$ such that
\begin{equation}
\bbma P & 0 & 0 & 0 \\ 0 & -P & - L^\top & 0 \\ 0 & - L & 0 &  L \\ 0 & 0 &  L^\top & P \ebma
-\alpha \bbma N & 0\\ 0 & 0 \ebma > 0.
\label{st-ineq}
\end{equation}
Moreover, $K=LP^{-1}$ is a stabilizing feedback gain for all \mbox{$(A,B) \in \Sigma_\mathcal{D}$} provided that \eqref{st-ineq} is feasible.
\end{proposition}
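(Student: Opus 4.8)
The plan is to recast the quadratic-stability requirement as a single quadratic matrix inequality (QMI) inclusion and then invoke the matrix S-lemma of \cite{HenkQMI2023}. Writing $Z = \begin{bmatrix} A & B \end{bmatrix}^\top$ and recalling from \eqref{eq:explaining_QMI_induced} that $(A,B) \in \Sigma_\mathcal{D}$ if and only if $Z \in \mathcal{Z}_{n+m}(N)$, I first note that $A + BK = Z^\top \begin{bmatrix} I \\ K \end{bmatrix}$. Hence, for a fixed $K$ and $P > 0$, the Lyapunov inequality \eqref{eq:Lyap} is equivalent to
\begin{equation}
\begin{bmatrix} I \\ Z \end{bmatrix}^\top M \begin{bmatrix} I \\ Z \end{bmatrix} > 0,
\qquad
M \coloneqq \begin{bmatrix} P & 0 \\ 0 & -\begin{bmatrix} I \\ K \end{bmatrix} P \begin{bmatrix} I \\ K \end{bmatrix}^\top \end{bmatrix},
\end{equation}
i.e. to $Z \in \mathcal{Z}^+_{n+m}(M)$. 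Therefore $\mathcal{D}$ is informative for quadratic stabilization precisely when there exist $K$ and $P > 0$ such that $\mathcal{Z}_{n+m}(N) \subseteq \mathcal{Z}^+_{n+m}(M)$.

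The next step is to verify that the matrix S-lemma applies to $N$. Under the full-column-rank assumption, the lower-right $(n+m)\times(n+m)$ block of $N$ equals $\begin{bmatrix} X_- \\ U_- \end{bmatrix} \Phi_{22} \begin{bmatrix} X_- \\ U_- \end{bmatrix}^\top$, which is negative definite because $\Phi_{22} < 0$ and $\begin{bmatrix} X_-^\top & U_-^\top \end{bmatrix}$ has full column rank (equivalently, $\Sigma_\mathcal{D}$ is bounded by Proposition~\ref{prop:Sigma_properties(b)}). Together with $\mathcal{Z}_{n+m}(N) \neq \varnothing$ (since $(A_\textup{true},B_\textup{true}) \in \Sigma_\mathcal{D}$), which forces the generalized Schur complement of $N$ with respect to its lower-right block to be positive semidefinite, this yields $N \in \pmb{\Pi}_{n,n+m}$ with negative definite lower-right block. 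The matrix S-lemma \cite{HenkQMI2023} then gives: $\mathcal{Z}_{n+m}(N) \subseteq \mathcal{Z}^+_{n+m}(M)$ if and only if $M - \alpha N > 0$ for some $\alpha \geq 0$.

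Finally, I would linearize $M - \alpha N > 0$. Expanding $M$ and substituting $L = KP$ (so that $K = LP^{-1}$) turns its lower-right block into $-\begin{bmatrix} P & L^\top \\ L & LP^{-1}L^\top \end{bmatrix}$, which is nonlinear in the decision variables only through $LP^{-1}L^\top$. Adjoining an auxiliary block and taking a Schur complement with respect to a new $(4,4)$ block equal to $P > 0$ eliminates this term and produces exactly the $4\times 4$ LMI \eqref{st-ineq}; conversely, the Schur complement of \eqref{st-ineq} with respect to its $(4,4)$ block recovers $M - \alpha N > 0$ and shows that $P > 0$ is automatic. Chaining these equivalences establishes both directions. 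For the ``moreover'' part, feasibility of \eqref{st-ineq} reverses the chain to give \eqref{eq:Lyap} for $K = LP^{-1}$ and $P$; since $P > 0$, this Lyapunov inequality certifies that $A + BK$ is Schur for every $(A,B) \in \Sigma_\mathcal{D}$.

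The main obstacle is identifying the correct target matrix $M$ so that the per-system stability condition becomes the strict QMI $Z \in \mathcal{Z}^+_{n+m}(M)$, and checking that the hypotheses of the matrix S-lemma are met — in particular that the full-column-rank assumption forces the lower-right block of $N$ to be negative definite, which is exactly what licenses the lossless passage from the set inclusion to the single inequality $M - \alpha N > 0$. The remaining manipulations (the Schur-complement linearization and the change of variables $L = KP$) are routine.
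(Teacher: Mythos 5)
The paper states this proposition without proof, importing it verbatim from \cite[Thm.~5.1(b)]{HenkQMI2023}, and your reconstruction is correct and follows essentially the same route as that source: recasting quadratic stabilization over $\Sigma_\mathcal{D}$ as the inclusion $\mathcal{Z}_{n+m}(N)\subseteq\mathcal{Z}^+_{n+m}(M)$, checking $N\in\pmb{\Pi}_{n,n+m}$ with lower-right block $\begin{bmatrix} X_- \\ U_- \end{bmatrix}\Phi_{22}\begin{bmatrix} X_- \\ U_- \end{bmatrix}^\top<0$ under the rank hypothesis, invoking the strict matrix S-lemma, and linearizing via $L=KP$ and a Schur complement. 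These are exactly the manipulations the paper itself performs in reverse in the proof of Theorem~\ref{th:1} (passing between \eqref{st-ineq} and \eqref{st-ineq-k2}), and your observation that the $(4,4)$ block of \eqref{st-ineq} makes $P>0$ automatic is also sound.
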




 The LMI \eqref{st-ineq} contains $\frac{n(n+1)}{2}+mn+1$ unknowns. The computational complexity of this LMI can be reduced. To this end, we define
\begin{equation}
\begin{split}
\Gamma \coloneqq &\begin{bmatrix}
P & 0 & 0 \\
0 & 0 & 0 \\
0 & 0 & 0
\end{bmatrix}-\alpha N, \\
\Theta \coloneqq &\begin{bmatrix}
P & 0 \\
0 & -P
\end{bmatrix}-\alpha\begin{bmatrix}
N_{11} & N_{12} \\
N_{21} & N_{22}
\end{bmatrix}, \\
M\!\coloneqq &\!\begin{bmatrix}
-\alpha N_{33} & 0 \\
0 & -P
\end{bmatrix}-\begin{bmatrix}
\alpha N_{31} & \alpha N_{32} \\
0 & P
\end{bmatrix}\Theta^{\dagger} \begin{bmatrix}
\alpha N_{13} & 0 \\
\alpha N_{23} & P
\end{bmatrix}\!\!.
\end{split}
\end{equation}	
We partition matrix $M$ into four blocks, where the first block is denoted by $M_{11} \in \R^{m \times m}$, the second and third blocks are denoted by $M_{12}=M_{21}^\top \in \R^{m \times n}$, and the last block is denoted by $M_{22} \in \R^{n \times n}$. Now, the following proposition provides alternative LMI conditions to those in Proposition~\ref{prop:DDC}, reducing the number of unknowns to $\frac{n(n+1)}{2}+1$. 

\begin{proposition}[{\cite[Thm. 5.3(b)]{HenkQMI2023}}]
\label{prop:1p}
Suppose that $\begin{bmatrix}
X_-^\top & U_-^\top
\end{bmatrix}$ has full column rank. Then, the data $\mac{D}$ are informative for quadratic stabilization \emph{if and only if} there exist $P > 0$ and $\alpha\geq 0$ such that 
\begin{equation}
\label{eq:lmi1}
\Gamma > 0 \ \text{ and } \ \Theta > 0.
\end{equation}
Moreover, $K=-M_{12}M_{22}^{-1}$ is a stabilizing feedback gain for all $(A,B) \in \Sigma_\mathcal{D}$ provided that \eqref{eq:lmi1} is feasible. 
\end{proposition}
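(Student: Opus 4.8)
The plan is to derive the reduced conditions \eqref{eq:lmi1} from the feasibility LMI \eqref{st-ineq} of Proposition~\ref{prop:DDC} by \emph{eliminating} the slack variable $L$: the point is that \eqref{st-ineq} is affine in $L$ whereas \eqref{eq:lmi1} is not, so the content of the proposition is that projecting out $L$ leaves exactly $\Gamma>0$ and $\Theta>0$. Denoting the left-hand side of \eqref{st-ineq} by $\mac{L}(P,\alpha,L)$ and reading off its block sizes $(n,n,m,n)$, I would first write $\mac{L}=\mac{L}_0+ELF^\top+FL^\top E^\top$, where $\mac{L}_0$ gathers the $L$-free terms, $E$ selects the third (size-$m$) block, and $F$ realizes the column pattern $(0,-I,0,I)$ through which $L$ enters positions $(3,2)$, $(2,3)$, $(3,4)$, and $(4,3)$. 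For fixed $P>0$ and $\alpha\ge 0$ (the full-column-rank hypothesis being inherited from Proposition~\ref{prop:DDC}), the existence of a feasible $L$ is then governed by the projection (elimination) lemma, which reduces solvability to the two inequalities $W^\top\mac{L}_0W>0$ with the columns of $W$ spanning $\ker E^\top$ and $\ker F^\top$, respectively.

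Evaluating these two restrictions is the crux of the equivalence. Restricting to $\ker E^\top$ deletes the third block row and column and leaves the block-diagonal matrix $\mathrm{diag}(\Theta,P)$, so this condition is precisely $\Theta>0$ together with the standing $P>0$. Restricting to $\ker F^\top$ identifies the second and fourth block coordinates; the only effect is that the $-P$ in the $(2,2)$ block is cancelled by the $+P$ coming from the fourth block, and what remains is exactly $\Gamma$. Hence, for fixed $P>0$ and $\alpha\ge 0$, a feasible $L$ exists if and only if $\Gamma>0$ and $\Theta>0$; quantifying over $(P,\alpha)$ and invoking Proposition~\ref{prop:DDC} yields the claimed ``if and only if.''

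To produce the explicit gain I would not use the abstract solution of the elimination lemma but instead take the Schur complement of $\mac{L}$ with respect to its $\mathrm{diag}(\Theta,P)$ block (legitimate once $\Theta>0$ and $P>0$). This yields an $m\times m$ matrix $\mac{S}(L)$ whose pure quadratic part equals $-L(G_{22}+P^{-1})L^\top$ with $G\coloneqq\Theta^{-1}>0$ (and $G_{22}$ its lower-right $n\times n$ block), so $\mac{S}$ is concave in $L$ and feasibility is equivalent to $\mac{S}(L^\star)>0$ at the unique maximizer $L^\star$ obtained by completing the square. The role of the auxiliary matrix $M$ becomes apparent here: it is the symmetric Schur complement of $\Theta$ in $\bbma \Theta & \mac{R}^\top \\ \mac{R} & \mathrm{diag}(-\alpha N_{33},-P)\ebma$ with $\mac{R}\coloneqq\bbma \alpha N_{31} & \alpha N_{32} \\ 0 & P\ebma$, and substituting $G=\Theta^{-1}$ one reads off $M_{12}=v^\top P$ and $M_{22}=-P(G_{22}+P^{-1})P$, where $Lv+v^\top L^\top$ is the linear part of $\mac{S}$. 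Consequently $L^\star P^{-1}=-M_{12}M_{22}^{-1}$, giving $K=-M_{12}M_{22}^{-1}$, while $\mac{S}(L^\star)$ coincides with $M/M_{22}$.

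I expect the main obstacle to be the matrix bookkeeping in this last step: carefully completing the square and matching the outcome to the blocks of $M$, and confirming that the scalar condition $\mac{S}(L^\star)=M/M_{22}>0$ is the same as $\Gamma>0$. The cleanest way I see to settle the latter, and to reprove the equivalence purely by Schur complements without appealing to the elimination lemma, is a Haynsworth inertia identity: the bordered matrix above is congruent (flip the sign of its $m$-block) to one whose Schur complement with respect to the $-P$ block is $\Gamma$, and equating the two inertia decompositions while using $\Theta>0$ and $M_{22}=-P(G_{22}+P^{-1})P<0$ forces $\mathrm{In}(\Gamma)=(2n,0,0)+\mathrm{In}(M/M_{22})$ (writing $\mathrm{In}(\cdot)=(n_+,n_-,n_0)$ for the numbers of positive, negative, and zero eigenvalues); thus $\Gamma>0$ if and only if $M/M_{22}>0$. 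Since feasibility already entails $\Theta>0$ and $P>0$, evaluating at $L=L^\star$ certifies that $K=-M_{12}M_{22}^{-1}$ stabilizes every $(A,B)\in\Sigma_\mathcal{D}$.
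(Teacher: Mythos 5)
Your proof is correct, and the computations that carry the load all check out: in the elimination step, the restriction to $\ker E^\top$ is indeed $\mathrm{diag}(\Theta,P)$, and the restriction to $\ker F^\top$ is exactly $\Gamma$ (identifying the second and fourth block coordinates turns the $(2,2)$ block into $-P-\alpha N_{22}+P=-\alpha N_{22}$ while leaving every other entry untouched, since $L$ itself is annihilated by $F^\top W_F=0$); the Schur complement $\mathcal{S}(L)$ of \eqref{st-ineq} with respect to $\mathrm{diag}(\Theta,P)$ has quadratic part $-L(G_{22}+P^{-1})L^\top$ and linear part $Lv+v^\top L^\top$ with $v=-\alpha(G_{21}N_{13}+G_{22}N_{23})$, which matches $M_{12}=v^\top P$ and $M_{22}=-P(G_{22}+P^{-1})P<0$, so that $L^\star P^{-1}=-M_{12}M_{22}^{-1}$ and $\mathcal{S}(L^\star)=M|M_{22}$; and the inertia identity $\mathrm{In}(\Gamma)=(2n,0,0)+\mathrm{In}(M|M_{22})$ is right, since both Haynsworth decompositions of the (sign-flipped) bordered matrix give $\mathrm{In}(\Theta)+\mathrm{In}(M_{22})+\mathrm{In}(M|M_{22})=(0,n,0)+\mathrm{In}(\Gamma)$.

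One thing to be aware of: the paper does not prove this proposition at all; it imports it verbatim from \cite[Thm.~5.3(b)]{HenkQMI2023}, so there is no in-paper proof to match. The closest in-paper computation is the proof of Theorem~\ref{th:1}, which performs the same two Schur complements (first on the block $P$, then on $\Theta$) to show that, for fixed $(P,\alpha)$, feasibility of \eqref{st-ineq} with $L=KP$ is the QMI $K^\top\in\mathcal{Z}^+_n(M)$; the reference then closes the argument with QMI machinery, namely that $\mathcal{Z}^+_n(M)\neq\varnothing$ if and only if $M|M_{22}>0$, with $-M_{12}M_{22}^{-1}$ appearing as the $S=0$ center of the parameterization \eqref{set:kp}. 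Your route --- projection lemma to eliminate $L$, explicit completion of squares, and a Haynsworth inertia count to identify $\Gamma>0$ with $M|M_{22}>0$ --- is genuinely different: it is elementary and fully self-contained given Proposition~\ref{prop:DDC} (no QMI results needed), and it explains the specific gain as the unique maximizer of the concave Schur complement $\mathcal{S}$, which makes the ``Moreover'' part transparent. What the QMI route buys in exchange is the entire ellipsoidal parameterization of $\mathcal{K}(P,\alpha)$, of which your $L^\star P^{-1}$ is only the center --- and the paper needs that full set anyway for the fragility analysis in Section~\ref{sec:IV}.
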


Propositions~\ref{prop:DDC} and~\ref{prop:1p} can be used to obtain a stabilizing feedback gain for the unknown true system based on the collected data. However, the extent of the freedom over the choice of the feedback gain is not provided by these results. Such a description, which is instrumental in the fragility analysis of data-driven feedback gains, can be studied by the parameterization provided in the next section.  

\section{Parametrization of Data-driven Feedback Gains}
\label{sec:III}

In this section, we will parameterize the set of quadratically stabilizing feedback gains that can be obtained from the collected input-state data. To that end, suppose that 
$\begin{bmatrix}
X_-^\top & U_-^\top
\end{bmatrix}$ has full column rank. For $P > 0$ and $\alpha\geq 0$, we define
\begin{equation}
\begin{split}
\mathcal{K}(P,\alpha)\coloneqq \{LP^{-1}:L\ \text{satisfies LMI \eqref{st-ineq} with}\ P\ \text{and}\ \alpha\}.
\end{split}
\end{equation} 
The set of all quadratically stabilizing feedback gains that can be obtained from data $\mathcal{D}$ is the union of the sets $\mathcal{K}(P,\alpha)$ over all $P>0$ and $\alpha \geq 0$, which is denoted by 
\begin{equation}
    \mathcal{K}\coloneqq\bigcup_{P>0,\alpha \geq 0} \mathcal{K}(P,\alpha).
\end{equation} 
The following theorem provides a parameterization for such sets.
\begin{theorem}
\label{th:1}
Suppose that $\begin{bmatrix}
X_-^\top & U_-^\top
\end{bmatrix}$ has full column rank and data $\mac{D}$ are informative for quadratic stabilization. Let $P > 0$ and $\alpha\geq 0$ satisfy \eqref{eq:lmi1}. Then, $K \in \mac{K}(P,\alpha)$ \emph{if and only if} $K^\top \in \mac{Z}^+_n(M)$. Moreover, we have
\begin{equation}
\mathcal{K}(P,\alpha)\!\!=\!\!\{-M_{12}M_{22}^{-1}+(M|M_{22})^{\frac{1}{2}}S(-M_{22})^{-\frac{1}{2}}\!:\! S^\top\!\! S\!<\! I\}.
\label{set:kp}
\end{equation} 
\end{theorem}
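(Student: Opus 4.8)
The plan is to turn the feasibility of \eqref{st-ineq} (with the substitution $L=KP$) into the single quadratic matrix inequality that defines $\mac{Z}^+_n(M)$, and then to obtain \eqref{set:kp} from the completion-of-squares description of a strict QMI set. First I would substitute $L=KP$ into the left-hand side of \eqref{st-ineq} and call the resulting $(3n+m)\times(3n+m)$ matrix $\mac{M}$. Partitioned into blocks of sizes $n,n,m,n$, its leading $2n\times2n$ block is $\Theta$, its $(4,4)$ block is $P$, and the $(1,4)$ and $(2,4)$ blocks vanish. Hence the principal submatrix of $\mac{M}$ supported on the first, second and fourth block indices is $\begin{bmatrix}\Theta & 0\\ 0 & P\end{bmatrix}$, which is positive definite by \eqref{eq:lmi1} and $P>0$. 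Taking the Schur complement of $\mac{M}$ with respect to this block, $\mac{M}>0$ becomes equivalent to positivity of the remaining $m\times m$ block
\[
-\alpha N_{33}-g^\top\Theta^{-1}g-LP^{-1}L^\top, \qquad g\coloneqq\begin{bmatrix}-\alpha N_{13}\\ -L^\top-\alpha N_{23}\end{bmatrix}.
\]

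The crux is the algebraic identity
\[
\begin{bmatrix}I\\ K^\top\end{bmatrix}^{\!\top}\! M\begin{bmatrix}I\\ K^\top\end{bmatrix}=-\alpha N_{33}-g^\top\Theta^{-1}g-LP^{-1}L^\top,
\]
valid whenever $L=KP$. This is exactly why $M$ is defined as it is: expanding $M_{11}+M_{12}K^\top+KM_{21}+KM_{22}K^\top$ with $K^\top=P^{-1}L^\top$ and with $M$ written through $\Theta^{-1}$, the $P\,[\Theta^{-1}]_{22}\,P$ term inside $M_{22}$ produces the $-L[\Theta^{-1}]_{22}L^\top$ and $-LP^{-1}L^\top$ contributions, while the blocks carrying $N_{31},N_{32},N_{13},N_{23}$ reproduce the off-diagonal and $\alpha^2$ parts of $g^\top\Theta^{-1}g$. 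Granting this identity, $\mac{M}>0\iff K^\top\in\mac{Z}^+_n(M)$, and since $K\in\mac{K}(P,\alpha)$ means precisely that $L=KP$ solves \eqref{st-ineq}, this proves the first claim.

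For \eqref{set:kp} I would first check that $M_{22}<0$ and $M|M_{22}>0$. Negativity is immediate from $M_{22}=-P-P[\Theta^{-1}]_{22}P$ together with $\Theta^{-1}>0$; for $M|M_{22}>0$ one can either invoke nonemptiness of $\mac{Z}^+_n(M)$ (which holds by the first part once \eqref{st-ineq} is known to be solvable in $L$ for the given $P,\alpha$), or derive it from $\Gamma>0$ through the quotient property of Schur complements, up to the diagonal sign congruence $\begin{bmatrix}I&0&0\\0&I&0\\0&0&-I\end{bmatrix}$. With $M_{22}<0$ and $M|M_{22}>0$, completing the square gives
\[
\begin{bmatrix}I\\ K^\top\end{bmatrix}^{\!\top}\! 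M\begin{bmatrix}I\\ K^\top\end{bmatrix}=M|M_{22}-V^\top V, \quad V\coloneqq(-M_{22})^{\frac12}K^\top-(-M_{22})^{-\frac12}M_{21},
\]
so $K^\top\in\mac{Z}^+_n(M)$ iff $S\coloneqq V(M|M_{22})^{-\frac12}$ satisfies $S^\top S<I$. Solving $V=S(M|M_{22})^{\frac12}$ for $K^\top$, transposing (using $M_{21}^\top=M_{12}$ and symmetry of the square-root factors), and noting $S^\top S<I\iff SS^\top<I$ yields exactly the set on the right of \eqref{set:kp}; taking $S=0$ recovers the center $-M_{12}M_{22}^{-1}$, consistent with Proposition~\ref{prop:1p}.

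The main obstacle is the identity in the second paragraph: one must verify that the Schur complement defining $M$, after $L=KP$, collapses term-by-term onto $-\alpha N_{33}-g^\top\Theta^{-1}g-LP^{-1}L^\top$, keeping track of every mixed $N_{3i}[\Theta^{-1}]_{\bullet\bullet}L^\top$ and $\alpha^2$ contribution. The only other point needing care is the positivity $M|M_{22}>0$; the Schur-complement/quotient route from $\Gamma>0$ is clean but must be set up with the correct sign congruence, while the nonemptiness route relies on \eqref{eq:lmi1} rendering \eqref{st-ineq} feasible in $L$ for the same $P$ and $\alpha$.
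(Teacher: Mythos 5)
Your proposal is correct and follows essentially the same route as the paper's own proof: substitute $L=KP$, reduce the LMI \eqref{st-ineq} by Schur complements to the single QMI $K^\top\in\mathcal{Z}^+_n(M)$, and then parameterize that set. The only cosmetic differences are that you take the Schur complement over the blocks $(1,2,4)$ in one step where the paper does two sequential complements (first with respect to the $(4,4)$ block $P$, then with respect to $\Theta$), and that you complete the square explicitly---which has the minor benefit of making the strictness $M|M_{22}>0$ and $S^\top S<I$ transparent---where the paper instead invokes the parameterization result of \cite[Thm.~3.3]{HenkQMI2023}.
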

\begin{proof}
First, we prove that $K \in \mac{K}(P,\alpha)$ if and only if \mbox{$K^\top \in \mac{Z}^+_n(M)$}. For the ``only if'' part, let $K \in \mac{K}(P,\alpha)$. This implies that the LMI \eqref{st-ineq} is feasible with $L=KP$. Take the Schur complement of the matrix in \eqref{st-ineq} with respect to its lower block to have
\begin{equation}
\bbma 
P & 0 & 0 \\ 
0 & -P & - L^\top \\ 
0 & - L & -LP^{-1}L^\top 
\ebma -\alpha N > 0.
\label{st-ineq-k2}
\end{equation}
Now, since we have $\Theta>0$, take the Schur complement of \eqref{st-ineq-k2} with respect to the first $2n\times 2n$ block to have
\begin{equation}
\label{eq:schur_comp2}
-LP^{-1}L^\top - \alpha N_{33}-\begin{bmatrix}
\alpha N_{13} \\ L^\top\!\!+\alpha N_{23}
\end{bmatrix}^{\!\top}\!\!\Theta^{-1}\! \begin{bmatrix}
\alpha N_{13} \\ L^\top\!\!+\alpha N_{23}
\end{bmatrix}\!>\!0.
\end{equation}
Using $L=KP$ we observe that \eqref{eq:schur_comp2} can be written as
\begin{equation}
\label{eq:pf_qmi_M}
\begin{bmatrix}
I \\ K^\top
\end{bmatrix}^\top M\begin{bmatrix}
I \\ K^\top
\end{bmatrix}> 0.
\end{equation}
Therefore, $K^\top\in\mathcal{Z}^+_n(M)$. To prove the ``if'' part, let \mbox{$K^\top\in\mathcal{Z}^+_n(M)$}, i.e., \eqref{eq:pf_qmi_M} holds. Take $L=KP$. Observe that \eqref{eq:pf_qmi_M} implies \eqref{eq:schur_comp2}. Using a Schur complement argument, one can see that \eqref{st-ineq-k2} holds, and thus the LMI \eqref{st-ineq} is satisfied. Therefore, $LP^{-1}$, and thus $K$, belongs to $\mathcal{K}(P,\alpha)$. 

Next, we use \cite[Thm. 3.3]{HenkQMI2023} to parameterize the set $\mac{Z}^+_n(M)$. Since $P > 0$ and \mbox{$M_{22}\leq -P$}, we have \mbox{$\ker M_{22}=\{0\}$}. This readily implies that \mbox{$\ker M_{22} \subseteq \ker M_{12}$}. In addition, by hypothesis, the inequality \eqref{eq:pf_qmi_M} is feasible, which implies that $\mac{Z}_n (M) \neq \varnothing$. Thus, it follows from \cite[pp. 6,7]{HenkQMI2023} that \mbox{$M|M_{22}\geq 0$}. Therefore, we have $M \in \pmb{\Pi}_{m,n}$. Now, in view of \cite[Thm. 3.3]{HenkQMI2023}, every $K=LP^{-1}$ satisfies
\begin{equation}
    K=-M_{12}M_{22}^{-1}+(M|M_{22})^{\frac{1}{2}}S(-M_{22})^{-\frac{1}{2}}
\end{equation} 
for some $S$ with $S^\top S < I$.
\end{proof}

\section{Fragility Analysis}
\label{sec:IV}

In this section, we study the effect of \emph{additive} perturbations on the stabilizing feedback gains (see Fig.~\ref{fig:blkdiag_add}) within both model-based and data-driven settings. 

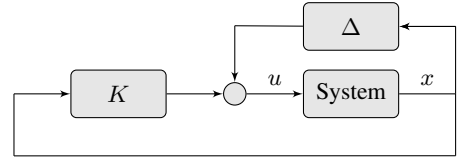
\begin{figure}[h!]
    \centering
    \begin{tikzpicture}[auto, node distance=2cm,>=latex']
    \node [rounded corners=0.75mm, block ,name=system]{System};
    \node [rounded corners=0.75mm, block, above=0.25cm of system] (D) {$\Delta$};
    \node [sum, left=0.75cm of system] (wnoise) {};
    \node [rounded corners=0.75mm, block, left=0.75cm of wnoise] (K) {$K$};
    \node [input, left=0.75cm of K] (input) {};
    \node [output, right=0.75cm of system] (output) {};
    \node [input, right=0.75cm of D] (dinput) {};
    \node [output, above=0.75cm of wnoise] (doutput) {};
    \node [output, below=0.5cm of system] (feed) {};
    \node [input, right=1.39cm of feed] (finput) {};
    \node [output, below=0.825cm of input] (foutput) {};
    \draw [->] (input) -- node {} (K);
    \draw [->] (K) -- node {} (wnoise);
    \draw [->] (wnoise) -- node {$u$} (system);
    \draw [-] (output) -- node {}(dinput);
    \draw [->] (dinput) -- node {}(D);
    \draw [->] (dinput) -- node {}(D);
    \draw [-] (D) -- node {}(doutput);
    \draw [->] (doutput) -- node {}(wnoise);
    \draw [-] (output) -- node {}(finput);
    \draw [-] (finput) -- node {}(foutput);
    \draw [-] (foutput) -- node {}(input);
    \draw [-] (system) -- node {$x$}(output);
    \end{tikzpicture}
    \caption{Additive feedback perturbations.}
    \label{fig:blkdiag_add}
\end{figure}

\subsection{Model-based analysis}
\label{sub:frag_mod}

For a stabilizable $(A,B)$, let $K$ be such that
\begin{equation}
A_K\coloneqq A+BK
\end{equation}
is Schur. We define the radius of stabilizing gains centered around $K$ as
\begin{equation}
\label{eq:psiK}
\mu_{(A,B)}^{\text{a}}(K)\!\!\coloneqq\! \sup \{\rho\!:\!A_K+B\Delta\ \text{is Schur for all}\ \Delta\!\in\!\mathcal{B}(0,\rho)\}.
\end{equation}
The value of $\mu_{(A,B)}^{\text{a}}(K)$ gives the largest bound for an additive feedback perturbation so that the closed-loop system remains stable. The following theorem characterizes the case where $\mu_{(A,B)}^{\text{a}}(K)$ is finite. 

\begin{theorem}
\label{prop:Bnonzero}
Suppose that $(A,B)$ is stabilizable. Let $K$ be such that $A_K$ is Schur. Then, $\mu_{(A,B)}^{\text{a}}(K)>0$. Moreover, $\mu_{(A,B)}^{\text{a}}(K)$ is finite \emph{if and only if} $B\neq 0$. 
\end{theorem}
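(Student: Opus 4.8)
The plan is to split the claim into three independent parts: positivity of $\mu_{(A,B)}^{\text{a}}(K)$, finiteness when $B \neq 0$, and non-finiteness (i.e.\ $\mu = +\infty$) when $B = 0$.

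\textbf{Positivity.} First I would argue that $\mu_{(A,B)}^{\text{a}}(K) > 0$ using a continuity/compactness argument. Since $A_K$ is Schur, all its eigenvalues lie in the open unit disc, so $\|A_K^j\| \to 0$, and in particular the spectral radius satisfies $\rho(A_K) < 1$. The map $\Delta \mapsto \rho(A_K + B\Delta)$ is continuous (eigenvalues depend continuously on matrix entries), so $\rho(A_K + B\Delta) < 1$ persists on a neighborhood of $\Delta = 0$. Concretely, there exists $\rho > 0$ such that $A_K + B\Delta$ is Schur for all $\Delta \in \mac{B}(0,\rho)$, which gives $\mu_{(A,B)}^{\text{a}}(K) \geq \rho > 0$. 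This step is routine and should be dispatched quickly.

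\textbf{The case $B = 0$.} If $B = 0$, then $A_K + B\Delta = A_K$ for every $\Delta$, so the perturbation has no effect on the closed-loop matrix. Since $A_K$ is Schur, $A_K + B\Delta$ is Schur for all $\Delta$ regardless of $\rho$, hence the supremum in \eqref{eq:psiK} is taken over all $\rho \geq 0$ and equals $+\infty$. Thus $\mu_{(A,B)}^{\text{a}}(K)$ is \emph{not} finite, establishing one direction of the equivalence (contrapositive: finite $\Rightarrow B \neq 0$).

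\textbf{The case $B \neq 0$.} This is the substantive direction and the main obstacle: I must exhibit, for arbitrarily large perturbation magnitude, a specific $\Delta$ that destabilizes $A_K + B\Delta$, thereby bounding $\mu$ above. The idea is to pick a unit vector $v$ with $Bv \neq 0$ (possible since $B \neq 0$) and a left eigenvector or a suitable output direction, then choose $\Delta$ of the form $\Delta = c\, v w^\top$ (a rank-one perturbation scaled by a growing constant $c$) designed to place an eigenvalue of $A_K + B\Delta$ outside the unit disc. A clean way is to drive the trace or a single eigenvalue: since $B \neq 0$, one can select $w$ so that $w^\top A_K u$ and the coupling through $Bv$ force an eigenvalue to grow without bound as $c \to \infty$. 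Equivalently, one shows $\sup_{c} \rho(A_K + c\,Bvw^\top) = +\infty$, so for every candidate radius there is an admissible $\Delta$ (of norm exceeding any fixed bound but still witnessing instability) making the matrix non-Schur; hence the set in \eqref{eq:psiK} is bounded above and $\mu_{(A,B)}^{\text{a}}(K) < +\infty$.

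\textbf{Anticipated difficulty.} The delicate point is guaranteeing that the rank-one direction actually pushes an eigenvalue \emph{out} of the unit disc rather than merely moving it within; one must verify the chosen $v, w$ produce unbounded spectral radius. The cleanest remedy is to reduce to a scalar quantity: choose $w$ to be a left eigenvector of $A_K$ (so $w^\top A_K = \lambda w^\top$) and $v$ with $w^\top B v \neq 0$, if such a pairing exists, so that $w^\top$ is a left eigenvector of $A_K + Bvw^\top/(w^\top Bv) \cdot c$ with eigenvalue $\lambda + c$, which clearly exits the disc for large $c$. If no eigenvector pairs nontrivially with the range of $B$, a short perturbation-theoretic or companion-form argument handles the degenerate alignment. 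Either way the conclusion $\mu_{(A,B)}^{\text{a}}(K) < \infty$ follows.
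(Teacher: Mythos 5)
Your positivity argument (continuity of the spectral radius) and your $B=0$ case are both fine; the former is a legitimate alternative to the paper's route, which instead perturbs a strict Lyapunov inequality $P-A_KPA_K^\top\geq\beta I$ — both are routine. The genuine gap is in the substantive direction $B\neq 0\Rightarrow\mu_{(A,B)}^{\text{a}}(K)<\infty$. Your construction $\Delta=c\,vw^\top$ with $w$ a left eigenvector of $A_K$ and $w^\top Bv\neq 0$ fails precisely in the two situations you wave off. First, such a pairing need not exist: when $A_K$ is defective its left eigenvectors need not span $\R^n$, and one can have $w^\top B=0$ for \emph{every} left eigenvector $w$ even though $B\neq 0$ — e.g.\ $A_K=\left[\begin{smallmatrix}0&1\\0&0\end{smallmatrix}\right]$, $B=\left[\begin{smallmatrix}1\\0\end{smallmatrix}\right]$, whose only left eigenvector direction is $e_2^\top$ with $e_2^\top B=0$. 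Second, if the relevant eigenvalue $\lambda$ is complex, then $w$ is complex and $\Delta=\frac{c}{w^\top Bv}vw^\top$ is not an admissible \emph{real} perturbation; taking real parts destroys the eigenvalue-shift identity $\lambda\mapsto\lambda+c$. Your proposed fallback (``a short perturbation-theoretic or companion-form argument'') is exactly where the remaining work lies, and no argument is actually given there.

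The repair is the one-line trace argument the paper uses, which your own ``drive the trace'' remark gestures at but does not carry out. A Schur matrix of size $n$ has all eigenvalues in the open unit disc, hence $|\tr(A_K+B\Delta)|<n$. Taking $\Delta=(\rho/\|B\|)B^\top$, which has spectral norm $\rho$, gives $\tr(A_K+B\Delta)=\tr(A_K)+\rho\,\tr(BB^\top)/\|B\|$ with $\tr(BB^\top)>0$ exactly when $B\neq 0$; so any $\rho$ for which the whole ball $\mathcal{B}(0,\rho)$ preserves stability must satisfy $\rho<(n-\tr(A_K))\|B\|/\tr(BB^\top)$, and $\mu_{(A,B)}^{\text{a}}(K)$ is finite. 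The same fix rescues your rank-one scheme without any reference to eigenvectors of $A_K$: since $\tr(A_K+c\,Bvw^\top)=\tr(A_K)+c\,w^\top Bv$, it suffices to pick $w=e_i$, $v=e_j$ with $B_{ij}\neq 0$ and let $|c|\to\infty$ with the appropriate sign, because one of the $n$ eigenvalues must then have modulus at least $|\tr|/n>1$. This is elementary, handles defective $A_K$ and complex spectra uniformly, and is the argument your proposal is missing.
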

\begin{proof}
Since $A_K$ is Schur, there exist $P>0$ and \mbox{$\beta>0$} such that the Lyapunov inequality \mbox{$P-A_K PA_K^\top \geq\beta I$} holds. Thus, there exists a sufficiently small $\rho>0$ such that \mbox{$P-(A_K+B\Delta) P(A_K+B\Delta)^\top>0$} holds for all \mbox{$\Delta\in\mathcal{B}(0,\rho)$}. This implies that $\mu_{(A,B)}^{\text{a}}(K)>0$. For the rest, first, suppose that $B=0$. In this case, $\mu_{(A,B)}^{\text{a}}(K)$ is obviously not finite. Next, suppose that $B\neq 0$. Let $\rho>0$ be such that $A_K+B\Delta$ is Schur for all $\Delta\in\mathcal{B}(0,\rho)$. Take $\Delta=(\rho /\|B\|)B^\top$. Since $A_K+B\Delta$ is Schur, we have $|\tr(A_K+B\Delta)|< n$, thus,
$\left|\tr(A_K)+\rho\tr(BB^\top)/\|B\|\right|< n$. This shows that $\rho< (n-\tr(A_K))\|B\|/\tr(BB^\top)$, which proves that $\mu_{(A,B)}^{\text{a}}(K)$ is finite. 
\end{proof}

For SISO systems, the value of $\mu_{(A,B)}^{\text{a}}(K)$ may be obtained using discrete-time counterparts of the Kharitonov's theorem \cite{hollot1986some}. However, for MIMO systems, computing $\mu_{(A,B)}^{\text{a}}(K)$ for given $(A,B)$ and $K$ is a hard problem in general (see, e.g., \cite{nemirovskii1993several}). Therefore, we aim at developing numerically tractable algorithms to approximate $\mu_{(A,B)}^{\text{a}}(K)$ by computing a positive lower bound for it. To that end, given $K$ and $P>0$ such that the Lyapunov inequality $P-A_KPA_K^\top>0$ holds, we define 
\begin{equation}
\label{eq:rhoKP}
\begin{split}
\kappa_{(A,B)}^{\text{a}}(P,K)\coloneqq \sup \{\rho:\text{for every}\ \Delta\in\mathcal{B}(0,\rho)\ \text{we have}\\
P-(A_K+B\Delta) P (A_K+B\Delta)^\top>0\}.
\end{split}
\end{equation}
We also define the largest value of $\kappa_{(A,B)}^{\text{a}}(P,K)$ over $P>0$ as
\begin{equation}
\label{eq:rhoK}
\begin{split}
\lambda_{(A,B)}^{\text{a}}(K)\coloneqq \sup\{\kappa_{(A,B)}^{\text{a}}(P,K):P>0\}.
\end{split}
\end{equation}
It is evident that $\kappa_{(A,B)}^{\text{a}}(P,K)\leq\lambda_{(A,B)}^{\text{a}}(K)\leq\mu_{(A,B)}^{\text{a}}(K)$. Now, we define the largest value of $\lambda_{(A,B)}^{\text{a}}(K)$ over $K$ as
\begin{equation}
\label{eq:def_lam_mod}
\lambda_{(A,B)}^{\text{a}}\coloneqq \sup\{\lambda_{(A,B)}^{\text{a}}(K):K\in\mathbb{R}^{m\times n}\}.
\end{equation}
For a given $(A,B)$, the value of $\lambda_{(A,B)}^{\text{a}}$ can be computed by solving an SDP provided by the following theorem.

\begin{theorem}
\label{th:frag-model-opt}
Suppose that $(A,B)$ is stabilizable and \mbox{$B\neq 0$}. Then, $\lambda_{(A,B)}^{\text{a}}=\sqrt{1/\beta_*}$ where $\beta_*$ is the optimal value of the following SDP: 
\begin{subequations}
\mathtoolsset{showonlyrefs=false}
\label{eq:th:frag-model-opt_s}
\begin{align}
\label{eq:th:frag-model-opt_s(a)}
\min_{Q,L}\ \beta & \\
\label{eq:th:frag-model-opt_s(b)}
\text{s.t.}&\hspace{0.15 cm}
\begin{bmatrix}
Q & QA^\top+L^\top B^\top  & Q \\
AQ+BL & Q-BB^\top  & 0 \\
Q & 0  & \beta I
\end{bmatrix}\geq 0.
\end{align}
\mathtoolsset{showonlyrefs=true}
\end{subequations}
Moreover, provided that $L_*$ and $Q_*$ are optimizers of the SDP~\eqref{eq:th:frag-model-opt_s}, $A+B(K_*+\Delta)$ with $K_*=L_*Q_*^{\dagger}$ is Schur for all $\|\Delta\|<\lambda_{(A,B)}^{\text{a}}$.   
\end{theorem}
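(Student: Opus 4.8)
The plan is to compute the per-$(P,K)$ radius $\kappa_{(A,B)}^{\text{a}}(P,K)$ in closed LMI form, take the supremum over $P>0$ and $K$ to express $\lambda_{(A,B)}^{\text{a}}$ as the optimal value of a semidefinite program, and then identify that program with \eqref{eq:th:frag-model-opt_s} via a change of variables. The workhorse throughout is the Schur-complement identity: for $P>0$ and $M\coloneqq A_K+B\Delta$, the Lyapunov inequality in \eqref{eq:rhoKP} satisfies
\[
P-MPM^\top>0 \iff \begin{bmatrix} P & MP \\ PM^\top & P\end{bmatrix}>0,
\]
which linearizes the quadratic dependence on $\Delta$.

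First I would fix $P>0$ and $K$ with $P-A_KPA_K^\top>0$ and rewrite the defining inequality of $\kappa_{(A,B)}^{\text{a}}(P,K)$. Using $MP=A_KP+B\Delta P$ together with the identity above, the condition becomes affine in $\Delta$:
\[
\begin{bmatrix} P & A_KP \\ PA_K^\top & P\end{bmatrix}+\begin{bmatrix} B \\ 0\end{bmatrix}\Delta\begin{bmatrix} 0 & P\end{bmatrix}+\begin{bmatrix} 0 \\ P\end{bmatrix}\Delta^\top\begin{bmatrix} B^\top & 0\end{bmatrix}>0 .
\]
Since this must hold for every $\Delta$ in the full (unstructured) ball $\mathcal{B}(0,\rho)$, I would invoke Petersen's lemma, which is lossless for a full-block $\Delta$ (equivalently, the matrix S-lemma): the displayed inequality holds for all $\|\Delta\|\le\rho$ \emph{if and only if} there exists $\epsilon>0$ with
\[
\begin{bmatrix} P-\epsilon BB^\top & A_KP \\ PA_K^\top & P-\rho^2\epsilon^{-1}P^2\end{bmatrix}>0 ,
\]
where I used $\begin{bmatrix}B\\0\end{bmatrix}\begin{bmatrix}B^\top&0\end{bmatrix}=\operatorname{diag}(BB^\top,0)$ and $\begin{bmatrix}0\\P\end{bmatrix}\begin{bmatrix}0&P\end{bmatrix}=\operatorname{diag}(0,P^2)$. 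Hence $\kappa_{(A,B)}^{\text{a}}(P,K)$ is the supremal $\rho$ for which such an $\epsilon$ exists.

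Next I would assemble $\lambda_{(A,B)}^{\text{a}}=\sup_{P>0,K}\kappa_{(A,B)}^{\text{a}}(P,K)$ by optimizing jointly over $\rho,\epsilon,P,K$. Two reductions turn this into the stated SDP. The substitution $L=KP$ with $Q=P$ makes the off-diagonal blocks $A_KP=AQ+BL$ and $PA_K^\top=QA^\top+L^\top B^\top$, and the rescaling $\tilde P=P/\epsilon$ (legitimate since $\epsilon>0$ and the bracketed matrix is homogeneous of degree one in it) eliminates $\epsilon$, leaving $\bigl[\begin{smallmatrix} Q-BB^\top & AQ+BL \\ QA^\top+L^\top B^\top & Q-\rho^2 Q^2\end{smallmatrix}\bigr]>0$ after renaming $\tilde P\to Q$. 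Setting $\beta=1/\rho^2$ and \emph{undoing} a Schur complement against a fresh diagonal block $\beta I$ reproduces the matrix in \eqref{eq:th:frag-model-opt_s(b)} up to a symmetric permutation of the two leading block rows/columns. Maximizing $\rho$ is minimizing $\beta$, so $\lambda_{(A,B)}^{\text{a}}=\sup\rho=\sqrt{1/\beta_*}$.

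The hard part will be the boundary bookkeeping between the strict, open characterization above and the closed SDP \eqref{eq:th:frag-model-opt_s} with $Q\ge 0$. I must show $\beta_*>0$ (finiteness of $\lambda_{(A,B)}^{\text{a}}$): if $\beta=0$ were feasible, the zero $(3,3)$ block forces the $(1,3)$ block $Q=0$ by the zero-diagonal rule for positive semidefinite matrices, whence the $(2,2)$ block $-BB^\top\ge 0$ gives $B=0$, contradicting $B\neq0$ — consistent with Theorem~\ref{prop:Bnonzero}. I then must argue the supremum of $\rho$ is attained in the limit $\beta\downarrow\beta_*$, and handle the most delicate point, the gain recovery: at an optimizer $(Q_*,L_*)$ the constraint only yields $Q_*\ge 0$, so $Q_*$ may be singular, which is exactly why the statement uses $K_*=L_*Q_*^{\dagger}$. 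I would show feasibility forces $\im L_*^\top\subseteq\im Q_*$ (so $L_*Q_*^{\dagger}$ is the genuine gain, independent of the generalized inverse on $\ker Q_*$), or run a perturbation argument with $Q_*+\delta I>0$ and let $\delta\downarrow 0$. Finally, for the ``moreover'' claim I would fix any $\rho<\lambda_{(A,B)}^{\text{a}}$, retrieve the associated $P_*>0$, $\epsilon$ from strict feasibility at that $\rho$, and conclude from $P_*-(A_{K_*}+B\Delta)P_*(A_{K_*}+B\Delta)^\top>0$ for all $\|\Delta\|\le\rho$ that $A+B(K_*+\Delta)$ is Schur by the discrete-time Lyapunov theorem.
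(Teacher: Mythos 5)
Your core reduction is correct and is essentially the paper's own route: the Petersen's-lemma step you invoke is exactly the matrix S-lemma used in the paper's Lemma~\ref{lem:mod_slem} (with multiplier $\gamma$ in place of your $\epsilon$), and your scaling $\tilde P=P/\epsilon$, the ``undone'' Schur complement against the fresh block $\beta I$, and the block permutation correctly reproduce the paper's claim \eqref{eq:claim_mod} that $\lambda^{\text{a}}_{(A,B)}=\sup\{\sqrt{1/\beta}:\mathcal{L}(Q,L,\beta)>0\}$. Your argument that $\beta_*>0$ is also valid. The genuine gaps sit precisely in the parts you defer as ``boundary bookkeeping,'' and two of your proposed fixes would fail. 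First, the gain-recovery claim $\im L_*^\top\subseteq\im Q_*$ (equivalently $\ker Q_*\subseteq\ker L_*$) is false in general: $L$ enters \eqref{eq:th:frag-model-opt_s(b)} only through $BL$, so whenever $\ker B\neq\{0\}$ you can add to an optimizer $L_*$ any $V$ with $BV=0$ and destroy the inclusion while preserving optimality. Feasibility only yields $\ker Q_*\subseteq\ker BL_*$ (from the first block column of the PSD matrix), which happens to suffice because $K_*$ likewise enters only through $BK_*$; this weaker inclusion, giving $BL_*Q_*^\dagger Q_*=BL_*$ and hence $BL_*=BK_*Q_*$, is exactly what the paper establishes and uses.

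Second, your plan for the ``moreover'' part does not prove the stated claim: fixing $\rho<\lambda^{\text{a}}_{(A,B)}$ and retrieving a strict certificate $(P_*,\epsilon)$ at that $\rho$ produces a certificate for \emph{some} gain depending on $\rho$, not for the fixed $K_*=L_*Q_*^\dagger$ — the optimizer sits on the nonstrict boundary, so all you have for $K_*$ is $\mathcal{L}(Q_*,K_*Q_*,\beta_*)\geq 0$ with $Q_*\geq 0$ possibly singular, and the discrete-time Lyapunov theorem you invoke requires a positive definite certificate. The paper closes this by Schur-complementing the nonstrict inequality, multiplying by $\begin{bmatrix} I & B\Delta\end{bmatrix}$ and its transpose to obtain $Q_*-(A+BK_*+B\Delta)Q_*(A+BK_*+B\Delta)^\top\geq\sigma BB^\top$ with $\sigma>0$, and then running an eigenvector argument combined with the Hautus test for stabilizability to conclude Schurness despite singular $Q_*$; some such argument is indispensable in your proof too. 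Finally, two supporting facts are asserted but not supplied: attainment of the SDP (your statement presumes optimizers exist; along a minimizing sequence only $Q_i\leq\beta_i I$ and $BL_i$ are bounded, and the paper must replace $L_i$ by $B^\top(BB^\top)^\dagger BL_i$ before applying Bolzano--Weierstrass), and the bridge that every $\beta>\beta_*$ is \emph{strictly} feasible, for which the paper uses stabilizability to build a strictly feasible direction $\bar{\mathcal{L}}(\bar Q,\bar L,r)>0$ and adds it to the nonstrict optimum. Without these, the identity $\lambda^{\text{a}}_{(A,B)}=\sqrt{1/\beta_*}$ between your open characterization and the closed SDP is not yet established.
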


To prove this theorem, we need an auxiliary result presented in the following lemma.

\begin{lemma}
\label{lem:mod_slem}
Suppose that $(A,B)$ is stabilizable. Let $K$ and $P>0$ be such that $P-A_KPA_K^\top>0$. Then, we have $\rho<\kappa_{(A,B)}^{\text{a}}(P,K)$ \emph{if and only if} there exists $\gamma\geq 0$ such that the following LMI is satisfied:
\begin{equation}
\label{eq:addp_mod_opt_com1_s}
    \begin{bmatrix}
         P & 0 \\ 0 & \gamma I
    \end{bmatrix}-\begin{bmatrix}
        A_K & B \\ I & 0 
    \end{bmatrix}^\top \begin{bmatrix}
         P & 0 \\ 0 & \gamma \rho^2 I 
    \end{bmatrix}\begin{bmatrix}
        A_K & B \\ I & 0
    \end{bmatrix}> 0.
\end{equation} \vspace{0.25 cm}
\end{lemma}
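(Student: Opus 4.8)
The plan is to recast the robust-stability requirement defining $\kappa_{(A,B)}^{\text{a}}(P,K)$ in \eqref{eq:rhoKP} as a containment between two QMI-induced sets in the perturbation variable $\Delta$, and then to invoke the matrix S-lemma \cite[Thm.~4.10]{HenkQMI2023}, in the same spirit as the proof of Theorem~\ref{th:1}. First I would remove the supremum: the set of admissible radii is an interval with right endpoint $\kappa_{(A,B)}^{\text{a}}(P,K)$, so I would show that $\rho<\kappa_{(A,B)}^{\text{a}}(P,K)$ is equivalent to the closed-ball statement that $P-(A_K+B\Delta)P(A_K+B\Delta)^\top>0$ for \emph{every} $\Delta\in\mathcal{B}(0,\rho)$. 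The forward implication is immediate from the definition of the supremum, and the converse is a compactness argument: since $\mathcal{B}(0,\rho)$ is compact and $\Delta\mapsto\lambda_{\min}(P-(A_K+B\Delta)P(A_K+B\Delta)^\top)$ is continuous, positivity on the ball forces a positive minimum, and continuity then extends the inequality to a slightly larger radius, so $\kappa_{(A,B)}^{\text{a}}(P,K)>\rho$. The hypothesis $P-A_KPA_K^\top>0$ ensures the nominal ($\Delta=0$) inequality is strict, hence $\kappa_{(A,B)}^{\text{a}}(P,K)>0$ and small $\rho\ge 0$ are admissible.

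Next I would identify the two QMIs, taking $Z=\Delta\in\R^{m\times n}$. The perturbation ball is $\{\Delta:\|\Delta\|\le\rho\}=\{\Delta:\rho^2 I-\Delta^\top\Delta\ge 0\}=\mac{Z}_m(N)$ with $N\coloneqq\begin{bmatrix}\rho^2 I & 0\\ 0 & -I\end{bmatrix}$, and a direct check gives $N\in\pmb{\Pi}_{n,m}$ with $N_{22}=-I<0$. On the other side, let $\widehat M$ denote the $\gamma$-independent part of the left-hand side of \eqref{eq:addp_mod_opt_com1_s}, so that the matrix in \eqref{eq:addp_mod_opt_com1_s} is exactly $\widehat M-\gamma N$. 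A Schur-complement expansion (using $P>0$) then shows that, for each fixed $\Delta$, the strict Lyapunov inequality is equivalent to $\begin{bmatrix}I\\ \Delta\end{bmatrix}^\top \widehat M\begin{bmatrix}I\\ \Delta\end{bmatrix}>0$, i.e.\ to $\Delta\in\mac{Z}^+_m(\widehat M)$. Combining this with the previous paragraph, the condition $\rho<\kappa_{(A,B)}^{\text{a}}(P,K)$ is equivalent to the set inclusion $\mac{Z}_m(N)\subseteq\mac{Z}^+_m(\widehat M)$.

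Finally I would apply the matrix S-lemma: since $N\in\pmb{\Pi}_{n,m}$ with $N_{22}<0$, the inclusion $\mac{Z}_m(N)\subseteq\mac{Z}^+_m(\widehat M)$ holds \emph{if and only if} $\widehat M-\gamma N>0$ for some $\gamma\ge 0$, which is precisely \eqref{eq:addp_mod_opt_com1_s}. I expect the ``only if'' (necessity) direction --- the losslessness of the S-procedure --- to be the crux; this is exactly what the cited matrix S-lemma delivers, so the genuine work lies in verifying its hypotheses (namely $N\in\pmb{\Pi}_{n,m}$, $N_{22}<0$, and $\mac{Z}_m(N)\neq\varnothing$, the last holding since $0\in\mac{Z}_m(N)$) and in the careful transpose/Schur-complement bookkeeping that matches the Lyapunov inequality defining $\kappa_{(A,B)}^{\text{a}}(P,K)$ to the QMI $\mac{Z}^+_m(\widehat M)$.
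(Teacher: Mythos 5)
Your proposal is correct and follows essentially the same route as the paper's own proof: the paper likewise encodes the ball as $\mathcal{Z}_m(M_\rho)$ with $M_\rho=\operatorname{diag}(\rho^2 I_n,-I_m)$, identifies the perturbations satisfying the strict Lyapunov inequality with $\mathcal{Z}^+_m(M_L)$ (your $\widehat M$), checks $M_\rho\in\pmb{\Pi}_{n,m}$ with negative definite lower block, and invokes the matrix S-lemma \cite[Thm.~4.10]{HenkQMI2023}, with your compactness argument for trading the supremum for the closed-ball statement being a detail the paper leaves implicit. One caveat you inherit from the paper itself: the QMI induced by $\widehat M$ expands to $P-(A_K+B\Delta)^\top P(A_K+B\Delta)>0$ rather than $P-(A_K+B\Delta)P(A_K+B\Delta)^\top>0$ as in \eqref{eq:rhoKP}, so the ``transpose bookkeeping'' you flag genuinely requires a fix (e.g., stating the definition of $\kappa^{\text{a}}_{(A,B)}(P,K)$ in the transposed form), since for a \emph{fixed} $P$ the two Lyapunov inequalities are not interchangeable.
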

\begin{proof}
We define
\begin{equation}
\label{eq:Mrho_s}
M_\rho \coloneqq \begin{bmatrix}
\rho^2 I_n & 0 \\
0 & -I_m
\end{bmatrix} \ \text{ and } \
M_L= \begin{bmatrix}
    P & 0 \\
    0 & 0
    \end{bmatrix}\!-\!\begin{bmatrix}
    A_K^\top \\
    B^\top
    \end{bmatrix}\! P \!\begin{bmatrix}
    A_K^\top \\
    B^\top
    \end{bmatrix}^\top.
\end{equation}
Observe that $\Delta$ satisfies $P-(A_K+B\Delta)^\top P (A_K+B\Delta) > 0$ for some $P>0$ if and only if $\Delta\in\mathcal{Z}^+_{m}(M_{L})$. We also observe that $\mathcal{B}(0,\rho)=\mathcal{Z}_{m}(M_\rho)$. One can verify that $M_\rho \in \pmb{\Pi}_{n,m}$ and the second diagonal block of $M_\rho$ is negative definite. Therefore, it follows from the matrix S-lemma \cite[Thm. 4.10]{HenkQMI2023} that $\mathcal{Z}_{m}(M_\rho)\subseteq\mathcal{Z}_{m}^+(M_L)$ if and only if  $M_L-\gamma M_\rho> 0$ for some $\gamma\geq 0$. This is equivalent to \eqref{eq:addp_mod_opt_com1_s}. 
\end{proof}

The proof of Theorem~\ref{th:frag-model-opt} now follows from Lemma~\ref{lem:mod_slem}.

\textit{Proof of Theorem~\ref{th:frag-model-opt}:}
 We first show that LMI \eqref{eq:th:frag-model-opt_s(b)} is feasible. For this, let $\nu>0$, $K$ and $P>0$ be such that 
$$
P-(A+BK)P(A+BK)^\top > \nu I.
$$ Take $Q=\frac{\|BB^\top\|}{\nu}P$ and $L=KQ$. Observe that 
\begin{equation}
\begin{split}
&Q-(AQ+BL)Q^{-1}(AQ+BL)^\top \\
&=Q-(A+BK)Q(A+BK)^\top\geq \|BB^\top\|I \geq BB^\top.
\end{split}
\end{equation} 
This implies that 
\begin{equation}
    \begin{bmatrix}
        Q & QA^\top+L^\top B^\top \\
AQ+BL & Q-BB^\top
    \end{bmatrix} > 0.
\end{equation} Thus, there exists a sufficiently large $\bar\beta \geq 0$ such that \eqref{eq:th:frag-model-opt_s(b)} is feasible for all $\beta\geq\bar \beta$. Now, we show that the SDP \eqref{eq:th:frag-model-opt_s} is attained. It follows from \eqref{eq:th:frag-model-opt_s} that any feasible solution satisfies $\beta \geq 0$. Let $\beta_*$ be the infimum value of $\beta$ such that \eqref{eq:th:frag-model-opt_s(b)} holds for some $Q$ and $L$. We have $\beta_*\leq\bar{\beta}$. Let $(\beta_i,Q_i,L_i)$ be a sequence satisfying \eqref{eq:th:frag-model-opt_s} such that $\lim_{i \rightarrow \infty} \beta_i=\beta_*$. Note that this sequence satisfies $Q_i\leq \beta_i I$. Hence, we have $\lim_{i \rightarrow \infty} Q_i\leq\beta_* I$. One can also verify that there exists $\eta>0$ such that $\lim_{i \rightarrow \infty} \|BL_i\|\leq \eta$. Therefore, $\beta_i$, $Q_i$, and $BL_i$ are all bounded sequences. Let $\bar{L}_i=B^\top (BB^\top)^\dagger BL_i$. Since $BL_i$ is a bounded sequence, $\bar{L}_i$ is a bounded sequence with the property that $BL_i=B\bar{L}_i$. Thus, $(\beta_i,Q_i,\bar{L}_i)$ is a bounded sequence that satisfies $\lim_{i \rightarrow \infty} \beta_i=\beta_*$. Now, it follows from the \mbox{Bolzano-Weierstrass} theorem that the optimal value is attained on a convergent subsequence.  

Next, we prove that $\lambda_{(A,B)}^{\text{a}}=\sqrt{1/\beta_*}$. For this, we denote
\begin{equation}
\label{eq:mod_slmi}
\mathcal{L}(Q,L,\beta) \coloneqq\begin{bmatrix}
Q & QA^\top+L^\top B^\top  & Q \\
AQ+BL & Q-BB^\top  & 0 \\
Q & 0  & \beta I
\end{bmatrix}.
\end{equation} 
We claim that
\begin{equation}
\label{eq:claim_mod}
    \lambda^\textup{a}_{(A,B)}=\sup\{\sqrt{1/\beta}: \mathcal{L}(Q,L,\beta) > 0 \text{ holds for some } Q \text{ and } L\}.
\end{equation}
To prove this claim, observe from Lemma~\ref{lem:mod_slem} and \eqref{eq:def_lam_mod} that $\lambda^\textup{a}_{(A,B)}$ is equal to the supremum value of $\rho$ such that \eqref{eq:addp_mod_opt_com1_s} holds for some $P$, $K$ and $\gamma \geq 0$. Note that $\gamma=0$ is not a feasible solution. We introduce the new variable $\bar P=P/\gamma$. We also note that $\rho$ can always be taken to be positive, thus, we suppose that $\rho > 0$. Using a Schur complement argument for the matrix in \eqref{eq:addp_mod_opt_com1_s}, we have
\begin{equation}
\label{eq:addp_mod_opt_schag1_s}
\begin{bmatrix}
    \bar P & 0 & A_K^\top & I  \\
    0 &  I & B^\top & 0 \\
    A_K & B & \bar P^{-1} & 0\\
    I & 0 & 0 & \frac{1}{\rho^2}I 
\end{bmatrix} > 0.
\end{equation} 
We multiply \eqref{eq:addp_mod_opt_schag1_s} from left and right by a block diagonal matrix with the diagonal blocks $\bar P^{-1}$ and $I$ to have
\begin{equation}
\label{eq:addp_mod_opt_schag1_s_mul}
\begin{bmatrix}
    \bar P^{-1} & 0 & \bar P^{-1}A_K^\top & \bar P^{-1}  \\
    0 &  I & B^\top & 0 \\
    A_K\bar P^{-1} & B & \bar P^{-1} & 0\\
    \bar P^{-1} & 0 & 0 & \frac{1}{\rho^2}I 
\end{bmatrix} > 0.
\end{equation} 
By a change of variables $Q=\bar P^{-1}$, $L=KQ$, and $\beta=1/\rho^2$, we have
\begin{equation}
\label{eq:addp_mod_opt_LMI1_s}
    \begin{bmatrix}
        Q & 0 &   QA^\top + L^\top B^\top & Q \\
        0 & I  & B^\top & 0 \\
        AQ+BL& B  & Q & 0 \\
        Q & 0 & 0 & \beta I
    \end{bmatrix} > 0.
\end{equation}
We take the Schur complement of the matrix in \eqref{eq:addp_mod_opt_LMI1_s} with respect to its second diagonal block to see that \eqref{eq:claim_mod} holds. Now, we use \eqref{eq:claim_mod} to show that $\lambda_{(A,B)}^{\text{a}}=\sqrt{1/\beta_*}$. To this end, let $Q_*$ and $L_*$ be the optimal solution of \eqref{eq:th:frag-model-opt_s} with the optimal value $\beta_*$. We claim that for any $\beta >\beta_*$, there exist $\hat Q$ and $\hat L$ such that $\mathcal{L}(\hat Q,\hat L,\beta)>0$. To show this, note that since $(A,B)$ is stabilizable, there exist $\bar Q$, $\bar L$ and $r$ such that 
\begin{equation}
\label{eq:strict_LQ}
\bar{\mathcal{L}}(\bar Q,\bar L,r)\coloneqq \begin{bmatrix}
\bar Q & \bar Q A^\top+\bar L^\top B^\top  & \bar Q \\
A\bar Q+B\bar L & \bar Q  & 0 \\
\bar Q & 0  & r I
\end{bmatrix} >0.
\end{equation}
Let $\beta>\beta_*$ and $\epsilon=r/(\beta-\beta_*)$. We take $\hat Q=Q_*+\bar Q/\epsilon$ and $\hat L=L_*+\bar L/\epsilon$. It follows from $\mathcal{L}(Q_*,L_*,\beta_*) \geq 0$ that 
\begin{equation}
\label{eq:const_mod}
\mathcal{L}(\hat Q,\hat L,\beta)=\mathcal{L}(Q_*,L_*,\beta_*)+\frac{1}{\epsilon}  \bar{\mathcal{L}}(\bar Q,\bar L,r) >0.
\end{equation}  
This proves the claim, and, in turn, it shows that $\lambda_{(A,B)}^{\text{a}}=\sqrt{1/\beta_*}$.

What remains to be proven is that $A+B(K_*+\Delta)$ with \mbox{$K_*=L_*Q_*^{\dagger}$} is Schur for all $\| \Delta \| < \lambda_{(A,B)}^{\text{a}}$. To show this, let $\Delta \in \R^{m \times n}$ satisfy $\|\Delta\|<\sqrt{1/\beta_*}$. We observe from the first $n$ columns of the matrix in \eqref{eq:th:frag-model-opt_s} that $\ker Q_* \subseteq \ker BL_*$. By the definition of the Moore-Penrose pseudoinverse, we have \mbox{$Q_*=Q_*Q_*^{\dagger}Q_*$}. This implies that $\im (I-Q_*^{\dagger}Q_*) \subseteq \ker Q_* \subseteq \ker BL_*$, and thus, \mbox{$BL_*Q_*^{\dagger}Q_*=BL_*$}. Hence, we have $BL_*=BK_*Q_*$. By substituting $BL_*=BK_*Q_*$ into \eqref{eq:th:frag-model-opt_s(b)}, we have 
\begin{equation}
\label{eq:L_mod_K_nonstrict}
  \mathcal{L}(Q_*,K_*Q_*,\beta_*)\geq 0.  
\end{equation} 
By taking the Schur complement of this with respect to its first diagonal block, we have
\begin{equation}
\label{eq:K*+Delta}
    \begin{bmatrix}
        Q_*\!-\!(A\!+\!BK_*) Q_* (A\!+\!BK_*)^{\!\top}\!-\!BB^\top & -(A\!+\!BK_*) Q_* \\
        -Q_* (A\!+\!BK_*)^\top & \beta_* I- Q_*
    \end{bmatrix} \!>0.
\end{equation} We multiply \eqref{eq:K*+Delta} from left and right, respectively, by $\begin{bmatrix}
    I & B\Delta
\end{bmatrix}$ and $\begin{bmatrix}
    I & B\Delta
\end{bmatrix}^\top$ to have
\begin{equation}
Q_*-(A+BK_*+B\Delta)Q_*(A+BK_*+B\Delta)^{\!\top} \!\!\geq\!\! B\!\left(\!I\!-\!\beta_*\Delta \Delta^{\!\top}\!\right)\!B^\top\!.
\end{equation}
Since $\|\Delta\|<\sqrt{1/\beta_*}$, we have $I-\beta_*\Delta \Delta^{\top}>0$. Let $\sigma>0$ be such that $I-\beta_*\Delta \Delta^{\top}\geq\sigma I$. Therefore, we have
\begin{equation}
\label{eq:lyp_like_ineq_BB}
Q_*-(A+BK_*+B\Delta)Q_*(A+BK_*+B\Delta)^{\top} \geq \sigma BB^\top.
\end{equation}
Let $v\in \mathbb{C}^n$ and $\eta \in \mathbb{C}$ be such that $(A+BK_*+B\Delta)^\top v=\eta v$. We multiply \eqref{eq:lyp_like_ineq_BB} from left and right, respectively, by $v^*$ and $v$ to have
\begin{equation}
\label{eq:1-lambda^2}
(1-|\eta|^2)v^*Q_*v \geq \sigma v^*BB^\top v.
\end{equation}
In case $B^\top v=0$, we have $(A+BK_*+B\Delta)^\top v =A^\top v=\eta v$. Since $(A,B)$ is stabilizable, it follows from the Hautus test that $|\eta|<1$. In case $B^\top v\neq0$, it follows from \eqref{eq:1-lambda^2} that $|\eta|<1$. Therefore, $A+BK_*+B\Delta$ is Schur, which completes the proof. \hfill \QED

Theorem~\ref{th:frag-model-opt} can be used to compute the optimal feedback gain $K_*$ that provides the least amount of fragility in the sense of the measure \eqref{eq:rhoK}. Nevertheless, if the feedback gain $K$ is given, one can also compute $\lambda_{(A,B)}^\textup{a}(K)$ by solving an SDP discussed in the following remark. 

\begin{remark}
\label{th:frag_model}
Suppose that $(A,B)$ is stabilizable and $B\neq 0$. Let $K$ be such that $A_K$ is Schur. Then, $\lambda_{(A,B)}^\textup{a}(K)= \sqrt{1/\beta_*}$ where $\beta_*$ is the optimal value of the following SDP:
\begin{equation}
\label{eq:th:frag-model_s}
\min_{Q}\ \beta  \hspace{0.15 cm} \text{s.t.}\hspace{0.15 cm}
\begin{bmatrix}
Q & QA^\top+QK^\top B^\top  & Q \\
AQ+BKQ & Q-BB^\top  & 0 \\
Q & 0  & \beta I
\end{bmatrix}\geq 0.
\end{equation}

\end{remark}

The following example illustrates the results of Theorems~\ref{th:frag-model-opt} and Remark~\ref{th:frag_model}.


\begin{example}
\label{ex:2}
Consider $A=\begin{bmatrix}
        1 & 1 \\ 0 & 1
    \end{bmatrix}$ and $B=\begin{bmatrix}
        0.5 \\ 1
    \end{bmatrix}$.
The set of all stabilizing feedback gains is shown by the triangle area in Fig.~\ref{fig:2a}. In particular, for $K=-\begin{bmatrix}
1 & 1
\end{bmatrix}$ we have $\mu_{(A,B)}^{\text{a}}(K)=0.447$, and Remark~\ref{th:frag_model} yields $\lambda_{(A,B)}^{\text{a}}(K)=0.333$. According to Theorem~\ref{th:frag-model-opt}, we have $\lambda_{(A,B)}^{\text{a}}=0.667$ that is attained by $K_*=-\begin{bmatrix}
0.667 & 1.333
\end{bmatrix}$. Fig.~\ref{fig:2b} provides a contour plot illustrating the level sets of stabilizing feedback gains with constant $\lambda^\textup{a}_{(A,B)}(K)$.
\end{example}

\begin{figure}[h!]
\centering
    \includegraphics[width=0.9\columnwidth]{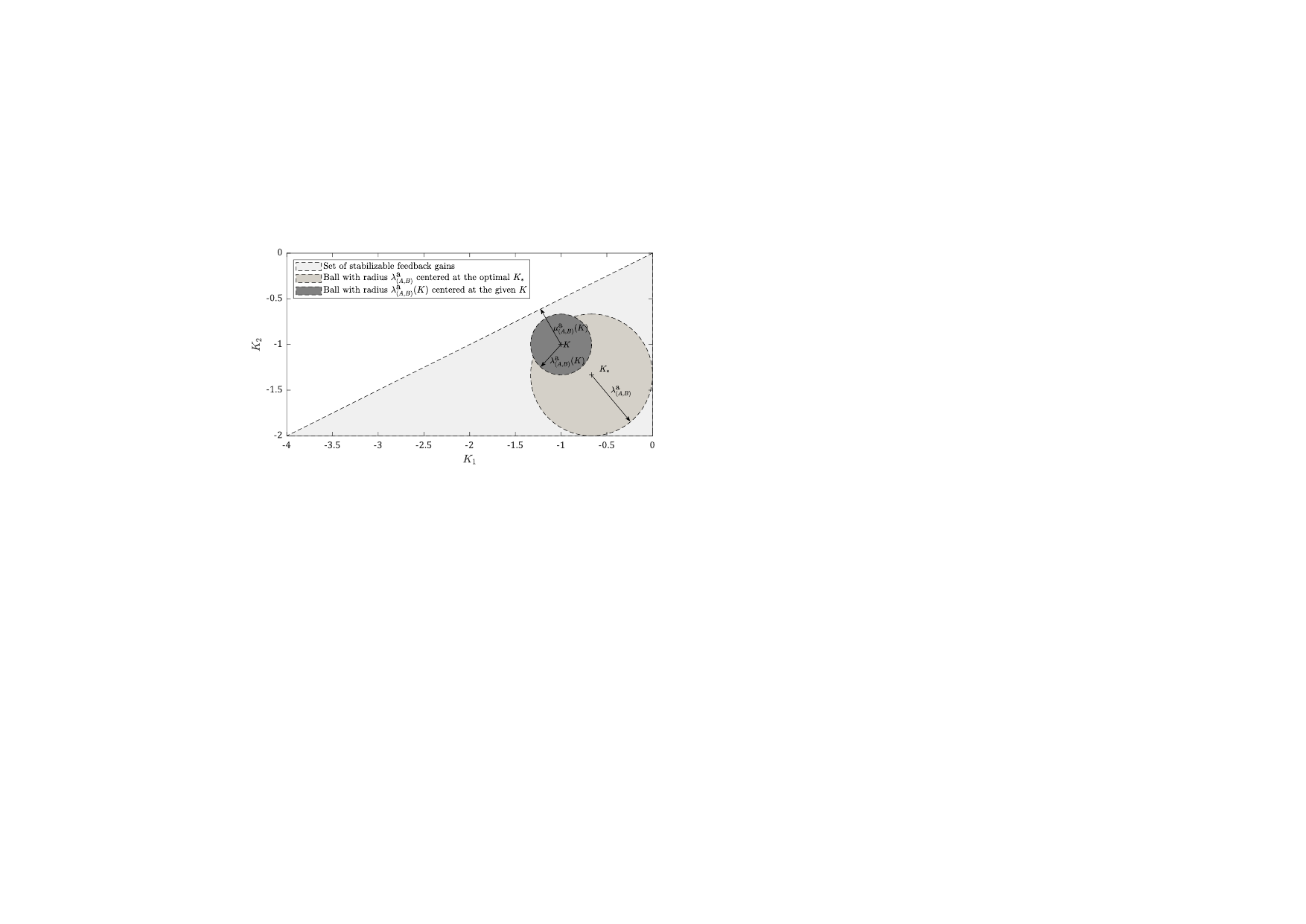}
    \caption{A visualization of the values of $\mu_{(A,B)}^\textup{a}(K)$ and $\lambda_{(A,B)}^\textup{a}(K)$ for a certain $K$, and the value of $\lambda_{(A,B)}^\textup{a}$ with the optimal $K_*$ for Example~\ref{ex:2}.}
    \label{fig:2a} 
\end{figure}
\begin{figure}[h!]
\centering
    \includegraphics[width=0.9\columnwidth]{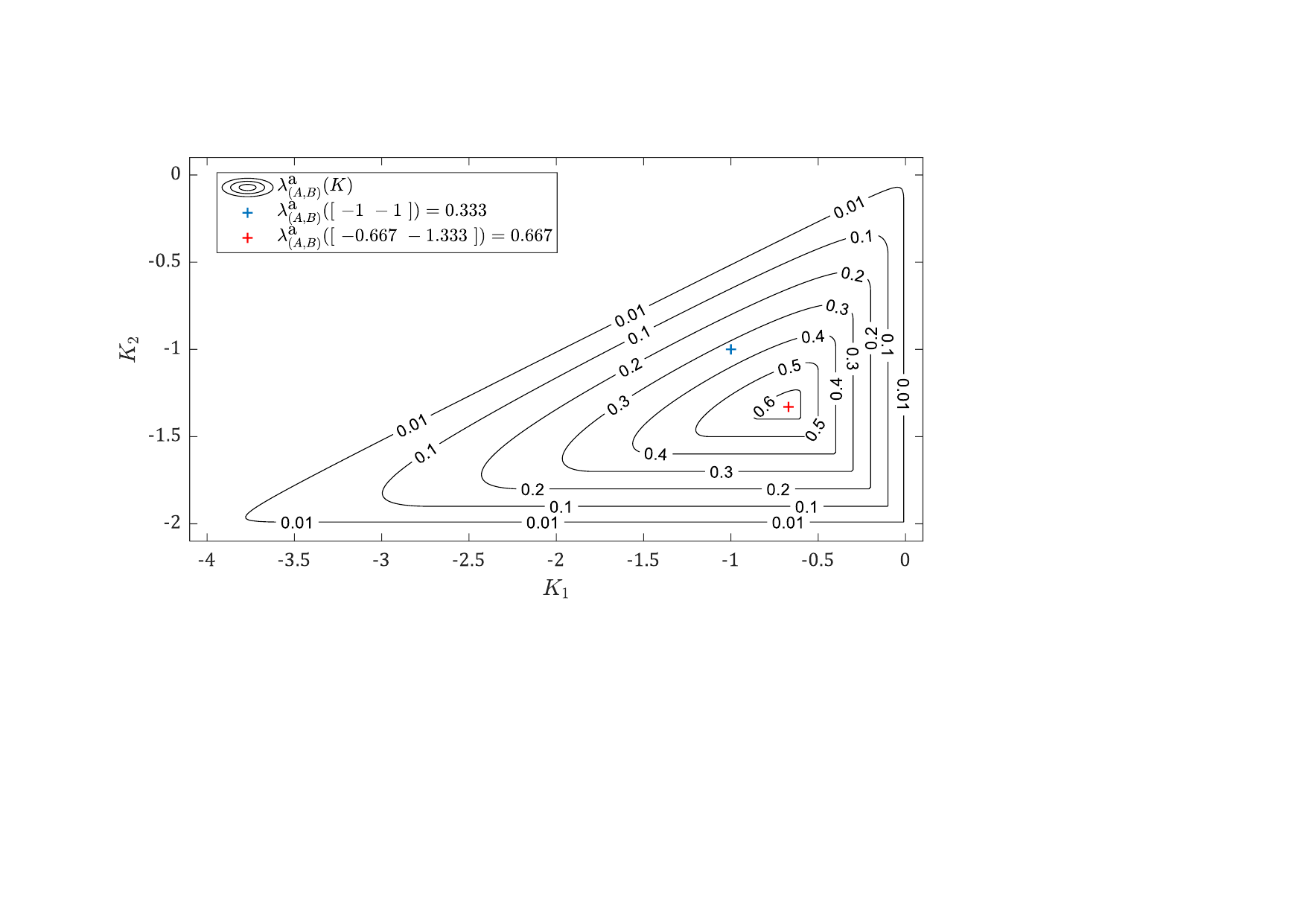}    
    \caption{Contours of constant $\lambda_{(A,B)}^\textup{a}(K)$ over all stabilizing feedback gains for Example~\ref{ex:2}.}
    \label{fig:2b}  
\end{figure}

\subsection{Data-driven analysis}

In this section, we turn our attention to the data-driven setting. Assume that the data $\mathcal{D}$ are informative for stabilization. Let $K$ be such that $A+BK$ is Schur for all $(A,B)\in\Sigma_\mathcal{D}$. Analogous to \eqref{eq:psiK}, we define
\begin{equation}
\begin{split}
\mu_{\mathcal{D}}^{\text{a}}(K)\coloneqq \sup \{\rho:A+BK+B\Delta\ \text{is Schur for all}\\
\Delta\in\mathcal{B}(0,\rho)\ \text{and all}\ (A,B)\in\Sigma_\mathcal{D}\}.
\end{split}
\end{equation}
The value of $\mu_{\mathcal{D}}^{\text{a}}(K)$ gives the largest bound for an additive feedback perturbation so that the closed-loop remains stable for all systems within $\Sigma_\mathcal{D}$. The two extreme cases where $\mu_{\mathcal{D}}^{\text{a}}(K)$ is zero or not finite are fully characterized by the following theorem.

\begin{theorem}
\label{prop:mu^a_bounded} 
Suppose that the data $\mathcal{D}$ are informative for stabilization. Let $K$ be such that $A+BK$ is Schur for all $(A,B)\in\Sigma_\mathcal{D}$. Then, the following statements hold:
\begin{enumerate}[label=(\alph*),ref=\ref{prop:mu^a_bounded}(\alph*)]
    \item\label{prop:mu^a_bounded(a)} $\mu_{\mathcal{D}}^{\text{a}}(K)=0$ \emph{if and only if} $\rank\begin{bmatrix}X_-^\top & U_-^\top\end{bmatrix}<n+m$.
    \item\label{prop:mu^a_bounded(b)} $\mu_{\mathcal{D}}^{\text{a}}(K)$ is not finite \emph{if and only if} $\Sigma_\mathcal{D}=\{(A_\text{true},B_\text{true})\}$ such that $B_\text{true}= 0$.
\end{enumerate}
\end{theorem}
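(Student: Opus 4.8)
My plan is to first convert the rank hypothesis of part~(a) into a statement about the size of $\Sigma_\mathcal{D}$. By Proposition~\ref{prop:Sigma_properties(b)}, the condition $\rank\begin{bmatrix}X_-^\top & U_-^\top\end{bmatrix}=n+m$ is equivalent to $\Sigma_\mathcal{D}$ being bounded, so part~(a) reads ``$\mu_{\mathcal{D}}^{\text{a}}(K)=0$ if and only if $\Sigma_\mathcal{D}$ is unbounded.'' For the bounded case I would note that $\Sigma_\mathcal{D}=\{(A,B):\begin{bmatrix}A & B\end{bmatrix}^\top\in\mathcal{Z}_{n+m}(N)\}$ is closed, being defined by a nonstrict QMI, and hence compact. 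The map $((A,B),\Delta)\mapsto$ spectral radius of $A+BK+B\Delta$ is continuous and, by hypothesis, strictly below $1$ on the compact set $\Sigma_\mathcal{D}\times\{0\}$. A standard compactness argument---the preimage of $[1,\infty)$ is compact and disjoint from $\Sigma_\mathcal{D}\times\{0\}$, so the two sets are separated by a positive distance---then produces a uniform radius $\rho>0$ with $A+BK+B\Delta$ Schur for all $(A,B)\in\Sigma_\mathcal{D}$ and all $\|\Delta\|\le\rho$, whence $\mu_{\mathcal{D}}^{\text{a}}(K)\ge\rho>0$.

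For the unbounded case I would exhibit a destabilizing perturbation of arbitrarily small norm. Rank deficiency gives a nonzero $(\xi,\eta)\in\mathbb{R}^n\times\mathbb{R}^m$ with $X_-^\top\xi+U_-^\top\eta=0$; substituting into the data equation shows that $(A+v\xi^\top,\,B+v\eta^\top)$ lies in $\Sigma_\mathcal{D}$ for every $v\in\mathbb{R}^n$ and every scaling, since it satisfies the same equation with the same noise. Feeding this whole line through $K$ gives the closed loop $A+BK+t\,v(\xi+K^\top\eta)^\top$; because $K$ stabilizes every point of $\Sigma_\mathcal{D}$, the choice $v=\xi+K^\top\eta$ would otherwise make the trace grow linearly in $t$, so we must have $\xi+K^\top\eta=0$, which forces $\eta\ne0$. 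The crucial construction is then $\Delta=\tfrac{\rho}{2}\,\eta w^\top/\|\eta\|$ with $w$ a unit vector, which has norm $\rho/2$ and satisfies $\Delta^\top\eta\ne0$; taking $v=w$, the closed loop becomes $A+BK+B\Delta+\tfrac{t\rho\|\eta\|}{2}\,w w^\top$, whose trace is unbounded in $t$ and which therefore fails to be Schur for large $t$. Since $\rho>0$ is arbitrary, $\mu_{\mathcal{D}}^{\text{a}}(K)=0$.

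For part~(b), $\mu_{\mathcal{D}}^{\text{a}}(K)$ not finite in particular means it is nonzero, so by part~(a) the data matrix has full column rank and $\Sigma_\mathcal{D}$ is compact. Since $\mu_{\mathcal{D}}^{\text{a}}(K)\le\mu_{(A,B)}^{\text{a}}(K)$ for every $(A,B)\in\Sigma_\mathcal{D}$ and each such pair is stabilizable with $A+BK$ Schur, Theorem~\ref{prop:Bnonzero} forces $B=0$ for every $(A,B)\in\Sigma_\mathcal{D}$; otherwise $\mu_{(A,B)}^{\text{a}}(K)$, and hence $\mu_{\mathcal{D}}^{\text{a}}(K)$, would be finite. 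The step I expect to be the main obstacle is upgrading ``$B\equiv0$ on $\Sigma_\mathcal{D}$'' to ``$\Sigma_\mathcal{D}$ is a singleton.'' Here I would use that full column rank makes the lower block $\tilde N_{22}:=\begin{bmatrix}N_{22}&N_{23}\\N_{32}&N_{33}\end{bmatrix}$ negative definite, so \cite[Thm.~3.3]{HenkQMI2023} parameterizes $\Sigma_\mathcal{D}$ as the matrix ellipsoid $\{Z_0+(-\tilde N_{22})^{-1/2}S(N|\tilde N_{22})^{1/2}:S^\top S\le I\}$, where $Z_0=-\tilde N_{22}^{-1}\tilde N_{21}$ with $\tilde N_{21}:=\begin{bmatrix}N_{21}\\N_{31}\end{bmatrix}$, and $N|\tilde N_{22}$ is the generalized Schur complement in \eqref{eq:N_schur}. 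The bottom $m$ rows of $(-\tilde N_{22})^{-1/2}$ are nonzero, so the $B$-block of the parameterized $Z$ can stay identically $0$ over all admissible $S$ only if $N|\tilde N_{22}=0$; by Proposition~\ref{prop:Sigma_properties(c)} this is exactly the singleton condition, giving $\Sigma_\mathcal{D}=\{(A_\text{true},B_\text{true})\}$, and $B_\text{true}=0$ because that single element already has a zero $B$-block.

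The converse direction of part~(b) is immediate: if $\Sigma_\mathcal{D}=\{(A_\text{true},0)\}$, then the closed loop $A_\text{true}+0\cdot(K+\Delta)=A_\text{true}$ is Schur regardless of $\Delta$, so no finite radius bounds the admissible perturbations and $\mu_{\mathcal{D}}^{\text{a}}(K)$ is not finite. Overall, I expect the two genuinely non-routine points to be the trace/recession construction in part~(a)---whose validity rests on verifying that the constructed pairs really lie in $\Sigma_\mathcal{D}$ and that an unbounded trace precludes the Schur property---and the geometric singleton argument in part~(b), where one must read off from the QMI parameterization that flatness of the admissible ellipsoid in the $B$-coordinates forces $N|\tilde N_{22}$ to vanish.
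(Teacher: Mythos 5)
Your proposal is correct, and it follows the same overall skeleton as the paper's proof: a recession-direction-plus-trace argument for the rank-deficient case of (a), a compactness argument for the full-rank case, and, for (b), reduction to Theorem~\ref{prop:Bnonzero} to get $B=0$ on all of $\Sigma_\mathcal{D}$, followed by the QMI parametrization of $\Sigma_\mathcal{D}$ to force the generalized Schur complement in \eqref{eq:N_schur} to vanish. The differences are in how you discharge the sub-steps of (a), and they make your version somewhat more self-contained. The paper invokes \cite[Lem.~15]{van2020data} to conclude $A_0+B_0\tilde K=0$ for every gain $\tilde K$ in the ball of radius $\mu_\mathcal{D}^{\text{a}}(K)$, and then needs $B_0\neq 0$, which it deduces from $\rank X_-=n$ via \cite[Thm.~16]{van2020data}; you instead inline the recession argument directly (the trace of the closed loop grows linearly in $t$ along $(A+tv\xi^\top,B+tv\eta^\top)$ unless $\xi+K^\top\eta=0$) and obtain $\eta\neq 0$ for free from the identity $\xi=-K^\top\eta$, bypassing both citations. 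Your construction $\Delta=\tfrac{\rho}{2\|\eta\|}\eta w^\top$ for arbitrary $\rho>0$ also sidesteps a small subtlety in the paper, which reasons about the \emph{closed} ball of radius exactly $\mu_\mathcal{D}^{\text{a}}(K)$ even though $\mu$ is defined as a supremum. In the converse direction you use continuity of the spectral radius over the compact set $\Sigma_\mathcal{D}$ where the paper runs a uniform quadratic-Lyapunov margin; both are standard (one nit: the preimage of $[1,\infty)$ is only guaranteed \emph{closed} unless you restrict $\Delta$ to a bounded set, but a closed set disjoint from a compact set is already at positive distance, so your separation argument goes through). Part (b) is essentially identical to the paper's: the paper concludes $L=0$ from $LSR_B=0$ for all admissible $S$ together with $R_B\neq 0$ and reads the singleton off the parametrization, while you phrase the same fact as flatness of the ellipsoid in the $B$-coordinates forcing the Schur complement to vanish and then invoke Proposition~\ref{prop:Sigma_properties(c)}; the two are interchangeable.
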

\begin{proof}
(a) To prove the ``if'' part, suppose that $\rank\begin{bmatrix}X_-^\top & U_-^\top\end{bmatrix}<n+m$. Let nonzero $(A_0,B_0)$ be such that $A_0X_-+B_0U_-=0$. Also, let $\rho=\mu_\mathcal{D}^\text{a}(K)$. By the definition, every $\tilde{K}\in\mathcal{B}(K,\rho)$ has the property that $A+B\tilde{K}$ is Schur for all $(A,B)\in\Sigma_\mathcal{D}$. Based on \cite[Lem. 15]{van2020data}, every such $\tilde{K}$ satisfies $A_0+B_0 \tilde{K}=0$. Thus, we have $A_0+B_0 K=0$. Since the data are informative for stabilization, we have $\rank X_-=n$ (see \mbox{\cite[Thm. 16]{van2020data}}). This implies that $B_0\neq 0$. Take $\Delta=(\rho/\|B_0\|) B_0^\top$ and observe that $A_0+B_0 K+B_0\Delta=0$ implies that $\rho B_0B_0^\top=0$. Since $B_0\neq 0$, we have $\rho=0$. We prove the ``only if'' part by contraposition. Suppose that $\begin{bmatrix}X_-^\top & U_-^\top\end{bmatrix}$ has full column rank. This implies that $\Sigma_\mathcal{D}$ is bounded. In this case, there exist $K$ and $\beta>0$ such that for every $(A,B)\in\Sigma_\mathcal{D}$ the Lyapunov inequality $P-(A+BK)P(A+BK)^\top \geq \beta I$ holds for some $P>0$. Since $\beta>0$, there exists a small enough $\rho>0$ such that for every $(A,B)\in\Sigma_\mathcal{D}$, the Lyapunov inequality $P-(A+B\tilde{K})P(A+B\tilde{K})^\top> 0$ holds for all $\tilde{K}\in\mathcal{B}(K,\rho)$. This implies that $\mu_{\mathcal{D}}^{\text{a}}(K)>0$. 

(b) The ``if'' part is obvious. For the ``only if'' part, suppose that $\mu_{\mathcal{D}}^{\text{a}}(K)$ is not finite. In view of part (a), this implies that $\rank \begin{bmatrix}X_-^\top & U_-^\top\end{bmatrix}=n+m$. Let $(A,B)\in\Sigma_\mathcal{D}$. Since $\mu_{\mathcal{D}}^{\text{a}}(K)$ is not finite, we have that $\mu_{(A,B)}^{\text{a}}(K)$ is also not finite. Due to Theorem~\ref{prop:Bnonzero}, this implies that $B=0$. This shows that every $(A,B)\in\Sigma_\mathcal{D}$ satisfies $B=B_\text{true}=0$. What remains to be proven is that $\Sigma_\mathcal{D}$ is a singleton. To this end, define
\begin{equation}
L=\left(N_{11}-\begin{bmatrix}
    N_{21} \\ N_{31}
    \end{bmatrix}^\top\begin{bmatrix}
    N_{22} & N_{23} \\
    N_{32} & N_{33}
    \end{bmatrix}^{-1}\begin{bmatrix}
    N_{21} \\ N_{31}
    \end{bmatrix}\right)^{\frac{1}{2}},
\end{equation}
\begin{equation}
R=\begin{bmatrix}
    -N_{22} & -N_{23} \\
    -N_{32} & -N_{33}
    \end{bmatrix}^{-\frac{1}{2}}, \ \text{and}
\end{equation}
\begin{equation}
\begin{bmatrix}
\hat{A} & \hat{B}
\end{bmatrix}=-\begin{bmatrix}
    N_{21} \\ N_{31}
    \end{bmatrix}^\top\begin{bmatrix}
    N_{22} & N_{23} \\
    N_{32} & N_{33}
    \end{bmatrix}^{-1}.
\end{equation}
According to the parametrization provided in \cite[Thm. 3.3]{HenkQMI2023}, one can verify that we have 
\begin{equation}
\label{eq:Exp_parameter}
\Sigma_\mathcal{D}=\{(A,B):\begin{bmatrix}
A & B
\end{bmatrix}=\begin{bmatrix}
\hat{A} & \hat{B}
\end{bmatrix}+LSR,SS^\top \leq I\}.
\end{equation}
Let $R_A\in\mathbb{R}^{(n+m)\times n}$ and $R_B\in\mathbb{R}^{(n+m)\times m}$ be such that $\begin{bmatrix}
R_A & R_B
\end{bmatrix}=R$. Note that $R>0$, thus, $R_B\neq0$. Now, since every data-consistent system $(A,B)$ satisfies $B=0$, we have $LSR_B=0$ for all $SS^\top \leq I$. This, together with $R_B\neq0$, implies that $L=0$. Therefore, the set $\Sigma_\mathcal{D}$ is a singleton due to \eqref{eq:Exp_parameter}. 
\end{proof}

\begin{remark}
\label{rem:1}
In case $\mu_{\mathcal{D}}^{\text{a}}(K)=0$, a small additive perturbation on the feedback gain, no matter how small, destabilizes a nonempty subset of $\Sigma_\mathcal{D}$. Therefore, a small additive perturbation may also destabilize the true system. We refer to this case as \emph{extreme fragility}. Theorem~\ref{prop:mu^a_bounded} shows that if $\begin{bmatrix}
X_-^\top & U_-^\top
\end{bmatrix}$ does not have full column rank, then any data-driven stabilizing feedback gain is extremely fragile. This condition is equivalent to the case where the set of data-consistent systems is unbounded. Therefore, in the noise-free case, the data-driven feedback gain is extremely fragile \emph{if and only if} the system cannot be uniquely identified, see \cite[Prop. 6]{van2020data}. 
\end{remark}

Now, we aim at developing numerically tractable methods to approximate $\mu_{\mathcal{D}}^{\text{a}}(K)$. Suppose that the data $\mathcal{D}$ are informative for quadratic stabilization. Let $K$, $P>0$, and $\alpha\geq 0$ be such that $K\in\mathcal{K}(P,\alpha)$. We define
\begin{equation}
\kappa_{\mathcal{D}}^{\text{a}}(P,\alpha,K)\coloneqq \sup \{\rho:K+\Delta \in \mathcal{K}(P,\alpha) \ \forall\Delta \in \mathcal{B}(0,\rho)\}.
\end{equation}
One can also define the largest value of $\kappa_{\mathcal{D}}^{\text{a}}(P,\alpha,K)$ over all $P > 0$ and $\alpha \geq 0$ as
\begin{equation}
\label{eq:def_lam_K}
    \lambda^\text{a}_\mac{D}(K) \coloneqq\sup\{\kappa^a_\mac{D}(P,\alpha,K):P >0,\alpha \geq 0\}.
\end{equation} 
Now, we define the largest value $\lambda^\text{a}_\mac{D}(K)$ over all $K \in \mathcal{K}$ as 
\begin{equation}
\label{eq:def_lam_optK}
    \lambda^\text{a}_\mac{D}\coloneqq \sup \{\lambda^\text{a}_\mac{D}(K):K \in \mac{K}\}.
\end{equation}
The value of $\lambda^\text{a}_\mac{D}$ can be obtained by solving an SDP as stated next.

\begin{theorem}
\label{thm:add_lambda}
Suppose that $\rank\begin{bmatrix}
        X^\top_- & U^\top_-
    \end{bmatrix}=n+m$, the data $\mac{D}$ are informative for quadratic stabilization, and \eqref{eq:N_schur} does not hold. Then, $\lambda^\text{a}_\mac{D}=\sqrt{\beta_*}$ where $\beta_*$ is the optimal value of the following SDP:
    \begin{subequations}
    \mathtoolsset{showonlyrefs=false}
    \label{eq:SDP_add_lambda_s}
    \begin{align}
    \label{eq:SDP_add_lambda_s(a)}
        \max_{Q,L,\zeta} \ \beta & \\ 
    \label{eq:SDP_add_lambda_s(b)}
        \text{s.t.} & \ \begin{bmatrix}
            Q & 0 & 0 & 0 & 0\\
            0 & -Q & -L^\top & -Q & 0\\
            0 & -L & -\beta I & 0 & L\\
            0 & -Q & 0 & I & Q\\
            0 & 0 & L^\top & Q & Q 
        \end{bmatrix}\!-\zeta\begin{bmatrix}
            N & 0 \\ 0 & 0
        \end{bmatrix} \geq 0.
    \end{align}
    \end{subequations} Moreover, provided $L_*$ and $Q_*$ are optimizers of \eqref{eq:SDP_add_lambda_s}, \mbox{$A+B(K_*+\Delta)$} with $K_*=L_*Q_*^\dagger$ is Schur for all $\|\Delta\| < \lambda^\text{a}_\mac{D}$ and all $(A,B) \in \Sigma_\mac{D}$.
\end{theorem}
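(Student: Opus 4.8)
The plan is to follow the template of Theorem~\ref{th:frag-model-opt}, replacing the Lyapunov stabilization condition by the data-driven parametrization of Theorem~\ref{th:1}. First I would recast the membership $K+\Delta\in\mac{K}(P,\alpha)$ as a single quadratic matrix inequality in the perturbation. By Theorem~\ref{th:1}, $K+\Delta\in\mac{K}(P,\alpha)$ if and only if $(K+\Delta)^\top\in\mac{Z}^+_n(M)$. Writing $\begin{bmatrix} I \\ (K+\Delta)^\top\end{bmatrix}=T\begin{bmatrix} I \\ \Delta^\top\end{bmatrix}$ with $T\coloneqq\begin{bmatrix} I & 0 \\ K^\top & I\end{bmatrix}$, this is equivalent to $\Delta^\top\in\mac{Z}^+_m(\tilde M)$, where $\tilde M\coloneqq T^\top M T$. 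Since $\mathcal{B}(0,\rho)=\{\Delta:\Delta^\top\in\mac{Z}_m(M_\rho)\}$ with $M_\rho\coloneqq\begin{bmatrix}\rho^2 I & 0 \\ 0 & -I\end{bmatrix}$, the quantity $\kappa^\text{a}_\mac{D}(P,\alpha,K)$ is the supremum of those $\rho$ for which $\mac{Z}_m(M_\rho)\subseteq\mac{Z}^+_m(\tilde M)$.

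Next, exactly as in Lemma~\ref{lem:mod_slem}, I would invoke the matrix S-lemma \cite[Thm.~4.10]{HenkQMI2023}: since $M_\rho\in\pmb{\Pi}_{m,n}$ has negative-definite lower block, the containment holds if and only if $\tilde M-\gamma M_\rho>0$ for some $\gamma\geq 0$. Combining this with the suprema \eqref{eq:def_lam_K} and \eqref{eq:def_lam_optK} gives that $(\lambda^\text{a}_\mac{D})^2$ equals the supremum of $\rho^2$ over all $P>0$, $\alpha\geq 0$, $\gamma\geq 0$, and $K$ with $\tilde M-\gamma M_\rho>0$. Here $\gamma=0$ is infeasible, since it would force $\tilde M>0$ and hence $M>0$, contradicting $M_{22}\leq -P<0$; thus $\gamma>0$. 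The key observation is that $M$ is positively homogeneous of degree one in $(P,\alpha)$, which one checks by inspecting its definition through $\Theta$ and $\Theta^\dagger$; hence so is $\tilde M$, and replacing $(P,\alpha)$ by $(P/\gamma,\alpha/\gamma)$ normalizes $\gamma=1$. Setting $\beta=\rho^2$ then reduces the problem to maximizing $\beta$ subject to $\tilde M-\begin{bmatrix}\beta I & 0\\0&-I\end{bmatrix}>0$, from which $\lambda^\text{a}_\mac{D}=\sqrt{\beta_*}$ follows once the supremum over strict inequalities is matched to the optimal value of the nonstrict SDP.

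The main obstacle is converting $\tilde M-\begin{bmatrix}\beta I & 0\\0&-I\end{bmatrix}>0$ into the explicit LMI \eqref{eq:SDP_add_lambda_s(b)}. This requires undoing the nested Schur complement that defines $M$ through $\Theta$, which reintroduces the $2n\times 2n$ block governed by $\Theta$ and accounts for the two additional $n$-sized blocks of \eqref{eq:SDP_add_lambda_s(b)} relative to \eqref{st-ineq}, and absorbing the congruence $T$ into the change of variables $L=KQ$. Tracking this algebra while identifying the S-lemma multiplier of $N$ with $\zeta$ and the robustness level with $\beta$ is the delicate bookkeeping; in particular, the entry $-\beta I$ in the $(3,3)$ block is exactly the image of the $\rho^2$-entry of $M_\rho$.

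It remains to establish feasibility, attainment, and the closed-loop guarantee. Feasibility of \eqref{eq:SDP_add_lambda_s(b)} follows from informativity for quadratic stabilization (take any $K\in\mac{K}$ and a suitable $\beta$); boundedness of $\Sigma_\mac{D}$ together with the failure of \eqref{eq:N_schur} guarantees, through Theorem~\ref{prop:mu^a_bounded}, that $0<\lambda^\text{a}_\mac{D}<\infty$, so $\beta_*$ is finite and positive. Attainment is then obtained by the same compactness (Bolzano–Weierstrass) argument as in Theorem~\ref{th:frag-model-opt}, after replacing $L$ by a bounded representative with the same action on $\im Q_*$. Finally, for the closed-loop claim I would use that the optimal LMI is nonstrict and add a strict slack built from stabilizability, as in \eqref{eq:strict_LQ}--\eqref{eq:const_mod}, to conclude that for every $\|\Delta\|<\sqrt{\beta_*}$ the gain $K_*+\Delta$ with $K_*=L_*Q_*^\dagger$ satisfies the strict LMI \eqref{st-ineq} for the recovered $P$ and $\alpha$; the inclusion $\ker Q_*\subseteq\ker L_*$, read off from the first block column of \eqref{eq:SDP_add_lambda_s(b)}, legitimizes $K_*=L_*Q_*^\dagger$. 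Hence $K_*+\Delta\in\mac{K}(P,\alpha)$, and by Proposition~\ref{prop:DDC} the matrix $A+B(K_*+\Delta)$ is Schur for all $(A,B)\in\Sigma_\mac{D}$.
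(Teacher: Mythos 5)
Most of your plan coincides with the paper's proof: your congruence $T=\begin{bmatrix} I & 0\\ K^\top & I\end{bmatrix}$ applied to $M$ is exactly the paper's matrix $M_K$ in Lemma~\ref{lem:dd_slem}, the matrix S-lemma step with $\gamma>0$ is the same, and your degree-one homogeneity of $M$ in $(P,\alpha)$ is precisely what licenses the paper's change of variables $Q=P/\gamma$, $L=KQ$, $\zeta=\alpha/\gamma$; feasibility and the strict-versus-nonstrict value matching (convexly combining the optimizer with a point strictly feasible at $\beta=0$) are also as in the paper. The genuine gap is in the ``moreover'' claim. You propose to add ``a strict slack built from stabilizability, as in \eqref{eq:strict_LQ}--\eqref{eq:const_mod}'' to conclude that $K_*+\Delta$ satisfies the strict LMI \eqref{st-ineq} and then to invoke Proposition~\ref{prop:DDC}. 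This fails twice over in the data-driven setting: the slack \eqref{eq:strict_LQ} is constructed from the \emph{known} pair $(A,B)$, which is unavailable here; and any data-driven strictly feasible surrogate $(\bar Q,\bar L,\bar\zeta)$ carries its own gain, so the convex combination $(\hat Q,\hat L)$ certifies robustness of $\hat L\hat Q^{-1}\neq K_*$, not of $K_*$. Pinning the gain would require a strictly feasible point of the structured form $(Q,K_*Q,\zeta,\beta)$ with $\beta>0$, which is essentially the statement to be proven and need not exist when $Q_*$ is singular (the supremum defining $\lambda^{\text{a}}_\mathcal{D}(K_*)$ need not be attained by a strict certificate), so any route through strict feasibility of \eqref{st-ineq} at level $\beta_*$ cannot close.

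The paper instead works directly with the \emph{nonstrict} optimal certificate: from $\mathcal{L}_{\mathcal{D}}(Q_*,L_*,\zeta_*,\beta_*)\geq 0$ one reads $\ker Q_*\subseteq\ker L_*$ (from the block rows/columns $(3,5)$, i.e., the $\begin{bmatrix} -\beta_* I-\zeta_* N_{33} & L_*\\ L_*^\top & Q_*\end{bmatrix}$ submatrix --- not from the first block column, which contains $-\zeta N_{21}$, $-\zeta N_{31}$ rather than $L$), hence $L_*=K_*Q_*$; then a Schur complement and multiplication by $\begin{bmatrix} I & A & B & B\Delta\end{bmatrix}$ --- legitimate because $\begin{bmatrix} I & A & B\end{bmatrix}N\begin{bmatrix} I & A & B\end{bmatrix}^\top\geq 0$ for every $(A,B)\in\Sigma_\mathcal{D}$ by \eqref{eq:explaining_QMI_induced} and $\zeta_*\geq 0$ --- yields $Q_*-(A+BK_*+B\Delta)Q_*(A+BK_*+B\Delta)^\top\geq B(\beta_* I-\Delta\Delta^\top)B^\top$, after which the eigenvector/Hautus argument of Theorem~\ref{th:frag-model-opt} gives Schurness per system, tolerating a singular $Q_*$ and a nonstrict inequality. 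You would need to import this (or an equivalent) argument. A secondary wrinkle: your attainment step borrows the bounded-representative trick $B^\top(BB^\top)^\dagger BL_i$, which again uses the unknown $B$; it is also unnecessary, since here the feasible set is bounded outright --- $0\leq Q\leq I$ from the last $2n\times 2n$ block, whence $L$ is bounded, and $\zeta$ is bounded because the failure of \eqref{eq:N_schur} gives $N$ a positive eigenvalue. That boundedness of $\zeta$ is the actual role of the hypothesis that \eqref{eq:N_schur} fails; your appeal to Theorem~\ref{prop:mu^a_bounded} for finiteness of $\lambda^{\text{a}}_\mathcal{D}$ does not substitute for it.
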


To prove this theorem, we need an auxiliary result presented in the following Lemma.

\begin{lemma}
\label{lem:dd_slem}
Suppose that $\rank\begin{bmatrix}
        X^\top_- & U^\top_-
    \end{bmatrix}=n+m$, the data $\mac{D}$ are informative for quadratic stabilization. Let $P>0$, $K$ and $\alpha \geq 0$ be such that $K \in \mathcal{K}(P,\alpha)$. Then, we have $\rho<\kappa_{\mathcal{D}}^{\text{a}}(P,\alpha,K)$ \emph{if and only if} there exists $\gamma > 0$ such that the following LMI is satisfied:
     \begin{equation}
    \label{eq:SDP_add_lambda_s_s_K}
        \begin{bmatrix}
            P & 0 & 0 & 0 & 0\\
            0 & -P & -PK^\top & -P & 0\\
            0 & -KP & -\gamma\rho^2 I & 0 & KP\\
            0 & -P & 0 & \gamma I & P\\
            0 & 0 & PK^\top & P & P 
        \end{bmatrix}-\alpha\begin{bmatrix}
            N & 0 \\ 0 & 0
        \end{bmatrix} > 0.
\end{equation}
\end{lemma}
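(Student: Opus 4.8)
The plan is to read both the admissible perturbation set and $\mathcal{K}(P,\alpha)$ as QMI-induced sets, so that the claimed equivalence becomes an instance of the matrix S-lemma, after which the displayed block form is recovered by a congruence and Schur complements. First I would turn the left-hand inequality into a set inclusion. The set of radii $\rho$ for which $K+\Delta\in\mathcal{K}(P,\alpha)$ for all $\Delta\in\mathcal{B}(0,\rho)$ is downward closed, and a compactness argument (the set $\mathcal{Z}^+_n(M)$ is open, closed balls are compact) shows that $\rho<\kappa_{\mathcal{D}}^{\textup{a}}(P,\alpha,K)$ holds \emph{iff} $K+\Delta\in\mathcal{K}(P,\alpha)$ for every $\Delta\in\mathcal{B}(0,\rho)$. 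By Theorem~\ref{th:1}, $K+\Delta\in\mathcal{K}(P,\alpha)$ is the same as $(K+\Delta)^\top\in\mathcal{Z}^+_n(M)$, and since $\Delta\mapsto\Delta^\top$ maps $\mathcal{B}(0,\rho)$ onto the radius-$\rho$ ball in $\R^{n\times m}$, the condition is exactly the inclusion $\mathcal{B}(K^\top,\rho)\subseteq\mathcal{Z}^+_n(M)$.

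Next I would write the ball itself as a QMI set. Expanding $\|Z-K^\top\|\le\rho$ gives $\mathcal{B}(K^\top,\rho)=\mathcal{Z}_n(M_\rho)$ with
\[
M_\rho=\begin{bmatrix}\rho^2 I-KK^\top & K\\ K^\top & -I\end{bmatrix},
\]
and one checks that $(M_\rho)_{22}=-I<0$, $M_\rho|(M_\rho)_{22}=\rho^2 I\ge0$, and $\ker (M_\rho)_{22}\subseteq\ker (M_\rho)_{12}$, so that $M_\rho\in\pmb{\Pi}_{m,n}$ with a negative-definite lower-right block. The matrix S-lemma \cite[Thm.~4.10]{HenkQMI2023} then converts $\mathcal{Z}_n(M_\rho)\subseteq\mathcal{Z}^+_n(M)$ into the existence of some $\gamma\ge0$ with $M-\gamma M_\rho>0$. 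Strictness $\gamma>0$ is automatic: the proof of Theorem~\ref{th:1} gives $M_{22}\le-P<0$, so $M$ has a negative-definite principal block and cannot be positive definite, ruling out $\gamma=0$.

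It remains to identify $M-\gamma M_\rho>0$ with \eqref{eq:SDP_add_lambda_s_s_K}. The key identity is $M_\rho=T\,\mathrm{diag}(\rho^2 I,-I)\,T^\top$ with $T=\begin{bmatrix}I & -K\\0 & I\end{bmatrix}$, so that the congruence by $T^{-1}$ turns $-\gamma M_\rho$ into $\mathrm{diag}(-\gamma\rho^2 I,\gamma I)$ and $M$ into $\begin{bmatrix}I & K\\0 & I\end{bmatrix}M\begin{bmatrix}I & 0\\ K^\top & I\end{bmatrix}$; reinstating the $2n\times2n$ pivot $\Theta$ hidden inside $M$ and adjoining one auxiliary $n\times n$ block to carry the remaining $P$-terms then reproduces, up to a symmetric block permutation, the five-block matrix of \eqref{eq:SDP_add_lambda_s_s_K}. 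I expect this bookkeeping to be the main obstacle — conceptually routine but easy to get wrong across the five blocks — so it is cleanest to run it in reverse and obtain both implications at once: Schur-complement \eqref{eq:SDP_add_lambda_s_s_K} against its leading pivot $\Theta>0$ (a principal block of the feasible \eqref{st-ineq}) and against its trailing pivot $P>0$, and then apply the congruence $T$ to return to $M-\gamma M_\rho>0$.
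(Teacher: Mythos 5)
Your proposal is correct and follows essentially the same route as the paper's proof: identify $\mathcal{K}(P,\alpha)$ with $\mathcal{Z}^+_n(M)$ via Theorem~\ref{th:1}, write the perturbation ball as a QMI-induced set, apply the matrix S-lemma of \cite[Thm.~4.10]{HenkQMI2023}, rule out $\gamma=0$ by the negative-definite block of $M$, and recover \eqref{eq:SDP_add_lambda_s_s_K} by Schur complements. The only differences are cosmetic — the paper shifts the QMI to a matrix $M_K$ centered at the origin and uses the diagonal $\bar M_\rho=\operatorname{diag}(\rho^2 I_m,-I_n)$, whereas you keep $M$ and center the ball QMI at $K^\top$, the two being related by exactly the congruence $T$ you exhibit — and your explicit compactness/openness argument for the step $\rho<\kappa_{\mathcal{D}}^{\text{a}}(P,\alpha,K)\Leftrightarrow\mathcal{B}(K^\top,\rho)\subseteq\mathcal{Z}^+_n(M)$ fills in a detail the paper leaves implicit.
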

\begin{proof}
We define $\bar M_\rho \coloneqq \begin{bmatrix}
        \rho^2I_m & 0 \\ 0 & -I_n
    \end{bmatrix}$ and
\begin{equation}
\label{eq:def_MDelta}
\begin{split}
    M_{K}\coloneqq-\begin{bmatrix}
        \alpha N_{13} & 0 \\ PK^\top+\alpha N_{23} & P 
    \end{bmatrix}^\top \Theta^{-1}\begin{bmatrix}
        \alpha N_{13} & 0 \\ PK^\top+\alpha N_{23} & P 
    \end{bmatrix} \\
    -\begin{bmatrix}
        K \\ I
    \end{bmatrix}P\begin{bmatrix}
        K \\ I
    \end{bmatrix}^\top - \alpha \begin{bmatrix}
        N_{33} & 0 \\ 0 & 0
    \end{bmatrix}.
\end{split}
\end{equation}  
We observe from Theorem~\ref{th:1} that $K+\Delta \in \mathcal{K}(P,\alpha)$ if and only if $K^\top+\Delta^\top \in \mac{Z}^+_n(M)$, i.e.,
\begin{equation}
    \label{eq:schurC_KPDelta}
    \begin{split}
        &-(K+\Delta)P(K+\Delta)^\top-\alpha N_{33} \\
        &-\begin{bmatrix}
        \alpha N_{13} \\ P(K+\Delta)^\top+\alpha N_{23}
    \end{bmatrix}^\top \!\!\! \Theta^{-1} \!\! \begin{bmatrix}
        \alpha N_{13} \\ P(K+\Delta)^\top+\alpha N_{23}
    \end{bmatrix} > 0.
    \end{split}
\end{equation}
Based on this inequality, one can verify that \mbox{$K^\top+\Delta^\top \in \mac{Z}^+_n(M)$} is equivalent to $\Delta^\top \in \mac{Z}^+_n(M_K)$. Observe that \mbox{$\mac{B}(0,\rho)=\mac{Z}_n(\bar M_\rho)$}. Also, observe that $\bar M_\rho \in \pmb{\Pi}_{m,n}$ and the second diagonal block of $\bar M_\rho$ is negative definite. Therefore, it follows from the matrix \mbox{S-lemma} \cite[Thm. 4.10]{HenkQMI2023} that $\mathcal{Z}_{n}(\bar M_\rho)\subseteq\mathcal{Z}_{n}^+(M_K)$ if and only if  \mbox{$M_K-\gamma \bar M_\rho> 0$} for some $\gamma\geq 0$. Since $M_K$ is not positive definite, we see that $\gamma\neq0$. Using a Schur complement argument, one can verify that $M_K-\gamma \bar M_\rho>0$ is equivalent to \eqref{eq:SDP_add_lambda_s_s_K}. 
\end{proof}

Now, we use this lemma to prove Theorem~\ref{thm:add_lambda}.

\textit{Proof of Theorem~\ref{thm:add_lambda}:}
 We first show that the LMI \eqref{eq:SDP_add_lambda_s(b)} is feasible. For this, it follows from Theorem~\ref{th:1} that there exist $\bar P>0$, $\bar L$, $\bar \alpha \geq 0$ and $\nu>0$ such that
\begin{equation}
\bbma \bar P & 0 & 0 & 0 \\ 0 & -\bar P & - \bar L^\top &  0 \\ 0 & -\bar L & -\nu I &  \bar L \\ 0 & 0 & \bar L^\top & \bar P \ebma
-\bar \alpha \bbma N & 0\\ 0 & 0 \ebma > 0.
\end{equation}
Thus, we observe that there exists a sufficiently large $\bar \gamma > 0$ such that \eqref{eq:SDP_add_lambda_s(b)} holds with $Q=\bar P/\bar \gamma$, $L=\bar L/\bar \gamma$ and $\beta=\nu/\bar \gamma$. Now, we show that the SDP \eqref{eq:SDP_add_lambda_s} is attained. We observe from the last $2n \times 2n$ diagonal block of the matrix in \eqref{eq:SDP_add_lambda_s(b)} that every feasible $Q$ satisfies $0\leq Q\leq I$. This, in turn, implies that there exists a $\delta>0$ such that every feasible $L$ satisfies $\|L\|\leq \delta$. Since \eqref{eq:N_schur} does not hold, $N$ has at least one positive eigenvalue. This implies that there exists a $\bar \zeta \geq 0$ such that every feasible $\zeta$ satisfies $0\leq \zeta \leq \bar \zeta$. Now, it is evident from the third diagonal block that there exists a $\bar\beta \geq 0$ such that every feasible $\beta$ satisfies $\beta \leq \bar\beta$. Hence, the set of all feasible solutions is bounded. Therefore, the SDP \eqref{eq:SDP_add_lambda_s} is attained.

Next, we prove that $\lambda_{\mathcal{D}}^\textup{a}=\sqrt{\beta_*}$. For this, define
 \begin{equation}
    \label{eq:SDP_add_lambda_s_s}
        \mathcal{L}_{\mathcal{D}}(Q,L,\zeta,\beta)\coloneqq\begin{bmatrix}
            Q & 0 & 0 & 0 & 0\\
            0 & -Q & -L^\top & -Q & 0\\
            0 & -L & -\beta I & 0 & L\\
            0 & -Q & 0 & I & Q\\
            0 & 0 & L^\top & Q & Q 
        \end{bmatrix}-\zeta\begin{bmatrix}
            N & 0 \\ 0 & 0
        \end{bmatrix}.
\end{equation}
We claim that
\begin{equation}
\label{eq:clia_dd}
    \lambda_{\mathcal{D}}^\textup{a}=\sup \{\sqrt{\beta}: \mathcal{L}_{\mathcal{D}}(Q,L,\zeta,\beta)>0 \text{ holds for some $Q$, $L$ and $\zeta$}\}.
\end{equation}
To prove this claim, we observe from Lemma~\ref{lem:dd_slem} that $\lambda_\mathcal{D}^\textup{d}$ is equal to the supremum value of $\rho$ such that \eqref{eq:SDP_add_lambda_s_s_K} holds for some $P$, $K$, $\alpha\geq 0$, and $\gamma > 0$. Take $Q=P/\gamma$, $L=KQ$, and $\zeta=\alpha/\gamma$, one can verify that $\mathcal{L}_{\mathcal{D}}(Q,L,\zeta,\beta)> 0$ is equivalent to \eqref{eq:SDP_add_lambda_s_s_K}. Thus, the claim holds. This claim can be used to show that $\lambda_{\mathcal{D}}^\textup{a}=\sqrt{\beta_*}$. To this end, we prove that there exist $\hat Q$, $\hat L$, and $\hat \zeta$ such that \mbox{$\mathcal{L}_{\mathcal{D}}(\hat Q,\hat L,\hat \zeta,\beta)>0$} for any $\beta < \beta_*$. Since the data $\mathcal{D}$ are informative for quadratic stabilization, it follows from Lemma~\ref{lem:dd_slem} that there exist $\bar Q$, $\bar L$, and $\bar \zeta$ such that $\mathcal{L}_{\mathcal{D}}(\bar Q,\bar L,\bar \zeta,0)>0$. Let $Q_*$, $L_*$ and $\zeta_*$ be optimal solutions for the SDP \eqref{eq:SDP_add_lambda_s}, i.e., $\mathcal{L}_{\mathcal{D}}(Q_*,L_*,\zeta_*,\beta_*)\geq 0$. Let $\epsilon=\beta/\beta_*$. We take $\hat Q=\epsilon Q_*+(1-\epsilon)\bar Q$, $\hat L=\epsilon L_*+(1-\epsilon)\bar L$ and $\hat \zeta=\epsilon \zeta_*+(1-\epsilon)\bar \zeta$. We observe that
$$
\mathcal{L}_{\mathcal{D}}(\hat Q,\hat L,\hat \zeta,\beta)=\epsilon \mathcal{L}_{\mathcal{D}}(Q_*,L_*,\zeta_*,\beta_*)+(1-\epsilon)\mathcal{L}_{\mathcal{D}}(\bar Q,\bar L,\bar \zeta,0).
$$
It follows from $\mathcal{L}_{\mathcal{D}}(Q_*,L_*,\zeta_*,\beta_*)\geq0$ that $\mathcal{L}_{\mathcal{D}}(\hat Q,\hat L,\hat \zeta,\beta)>0$. This implies that $\beta^*$ is equal to the supremum value of $\beta$ such that $\mathcal{L}_{\mathcal{D}}(Q,L,\zeta,\beta)>0$, i.e., $\lambda_{\mathcal{D}}^\textup{a}=\sqrt{\beta_*}$.

What remains to be proven is that $A+BK_*+B\Delta$ is Schur with $K_*=L_*Q_*^{\dagger}$ for all $\|\Delta\|<\lambda_{\mathcal{D}}^\textup{a}$ and all \mbox{$(A,B)\in\Sigma_\mathcal{D}$}. To show this, let $\Delta \in \R^{m \times n}$ satisfy $\|\Delta\|<\sqrt{\beta_*}$ and $(A,B)\in\Sigma_\mathcal{D}$. Observe from $\mathcal{L}_{\mathcal{D}}(Q_*,L_*,\zeta_*,\beta_*)\geq0$ that we have $\ker Q_*\subseteq\ker L_*$. We also have $Q_*Q_*^\dagger Q_*=Q_*$, which implies that \mbox{$\im (I=Q_*^\dagger Q_*)\subseteq \ker Q_*\subseteq \ker L_*$}. Hence, we have \mbox{$L_*Q_*^\dagger Q_*=L_*$}. This implies that $L_*=KQ_*$. Therefore,
\begin{equation}
\mathcal{L}_{\mathcal{D}}(Q_*,KQ_*,\zeta_*,\beta_*)\geq 0.
\end{equation}
Take the Schur complement of this with respect to the last $n\times n$ diagonal block to have
\begin{equation}
\label{eq:proof_some_ineq}
\begin{bmatrix}
Q_* & 0 & 0 & 0 \\
0 & -Q_* & -Q_*^\top K_*^\top & -Q_* \\
0 & -K_* Q_* & -K_* Q_*K_*^\top \!-\!\beta_* I & -K_*Q_* \\
0 & -Q_* & -Q_* K_*^\top & I-Q_*
\end{bmatrix}\!-\zeta_*\! \begin{bmatrix}
N & 0 \\
0 & 0
\end{bmatrix}\!\geq\! 0.
\end{equation}
It follows from \eqref{eq:explaining_QMI_induced} that $\begin{bmatrix}
I \!\!&\!\! A \!\!&\!\!  B
\end{bmatrix} N\begin{bmatrix}
I \!\!&\!\! A \!\!&\!\!  B
\end{bmatrix}^{\!\top}\!\geq\! 0$. Using this, we multiply \eqref{eq:proof_some_ineq} from left and right, respectively, by $\begin{bmatrix}
I & A & B & B\Delta
\end{bmatrix}$ and its transpose to have
\begin{equation}
Q_*-(A+BK_*+B\Delta)Q_*(A+BK_*+B\Delta)^\top\geq B(\beta_* I-\Delta \Delta ^\top)B^\top.
\end{equation}
Since the data are informative for quadratic stabilization, the pair $(A,B)$ is stabilizable. Now, using the same argument as in the proof of Theorem~\ref{th:frag-model-opt}, one can verify that since $\|\Delta\|<\sqrt{\beta_*}$, we have that $A+BK_*+B\Delta$ is Schur. This completes the proof. \hfill \QED

Theorem~\ref{thm:add_lambda} is the data-driven counterpart of Theorem~\ref{th:frag-model-opt}, which provides the least fragile data-driven feedback gain in the sense of measure \eqref{eq:def_lam_K}. To analyze the fragility of a given feedback gain in the data-driven setting, one can solve the SDP provided by the following remark. 

\begin{remark}
\label{reM:lam_dd_optK}
Suppose that $\rank\begin{bmatrix}
        X^\top_- & U^\top_-
    \end{bmatrix}=n+m$, the data $\mac{D}$ are informative for quadratic stabilization, and \eqref{eq:N_schur} does not hold. Let $K \in \mac{K}(P,\alpha)$ for some $P>0$ and $\alpha \geq 0$. Then, $\lambda^\text{a}_\mac{D}(K)=\sqrt{\beta_*}$ where $\beta_*$ is the optimal value of the following SDP:
    \begin{equation}
    \label{eq:SDP_add_lambda_s_K}
        \max_{Q,\zeta} \ \beta \ \text{s.t.}  
        \begin{bmatrix}
            Q & 0 & 0 & 0 & 0\\
            0 & -Q & -QK^\top & -Q & 0\\
            0 & -KQ & -\beta I & 0 & KQ\\
            0 & -Q & 0 & I & Q\\
            0 & 0 & QK^\top & Q & Q 
        \end{bmatrix}\!-\zeta\begin{bmatrix}
            N & 0 \\ 0 & 0
        \end{bmatrix} \geq 0.
    \end{equation}
\end{remark}


The following example illustrates the results of Theorems~\ref{thm:add_lambda} and Remark~\ref{reM:lam_dd_optK}.

\begin{example}
\label{ex:3}
Consider the true system \eqref{dy-lsys} with
\begin{equation}
A_\text{true}\coloneqq \begin{bmatrix}
1 & 1 \\
0 & 1
\end{bmatrix},\hspace{0.5 cm} B_\text{true}\coloneqq \begin{bmatrix}
0.5 \\
1
\end{bmatrix}.
\end{equation}
Assume that the noise sequence satisfies $\|W_-\|\leq 1$, which can be modeled by $\Phi_{11}=I$, $\Phi_{12}=\Phi_{21}^\top=0$, and $\Phi_{22}=-I$. Consider the collected input-state data and the noise signal as follows:
\begin{equation}
\label{eq:data-table}
\begin{array}{c|cccccc}
t      & 0 & 1 & 2 & 3 & 4 \\ \hline
u(t)   & 2 & -4 & 3 & 5 & - \\ \hline
x_1(t) & 0 & 1 & 2 & 1.5 & 5 \\
x_2(t) & 0 & 2 & -2 & 1 & 5 \\ \hline
w_1(t) & 0 & 1 & 0 & 0 & - \\
w_2(t) & 0 & 0 & 0 & -1 & -
\end{array}
\end{equation}
The LMIs in \eqref{eq:lmi1} are feasible\footnote{For the numerical examples of this paper, the LMIs and SDPs are solved using the YALMIP toolbox \cite{lofberg2004yalmip} of MATLAB with the MOSEK solver \cite{mosek}.}, hence, the data are informative for quadratic stabilization. The set $\mac{K}$ is shown in Fig.~\ref{fig:addp_dda}. In particular, for $K=-\begin{bmatrix}
    1.35 & 1.7
\end{bmatrix}$, according to Remark~\ref{reM:lam_dd_optK} we have \mbox{$\lambda_\mathcal{D}^\textup{a}(K)=0.055$}. Theorem~\ref{thm:add_lambda} yields $\lambda_\mathcal{D}^\textup{a}=0.087$, which is attained by $K_*=-\begin{bmatrix}
    1.426 & 1.782
\end{bmatrix}$. Fig.~\ref{fig:addp_ddb} provides a contour plot illustrating the level sets of stabilizing feedback gains with constant values of $\lambda_\mac{D}^\textup{a}(K)$.

\end{example}

\begin{figure}[h]
    \centering
        \includegraphics[width=0.9\columnwidth]{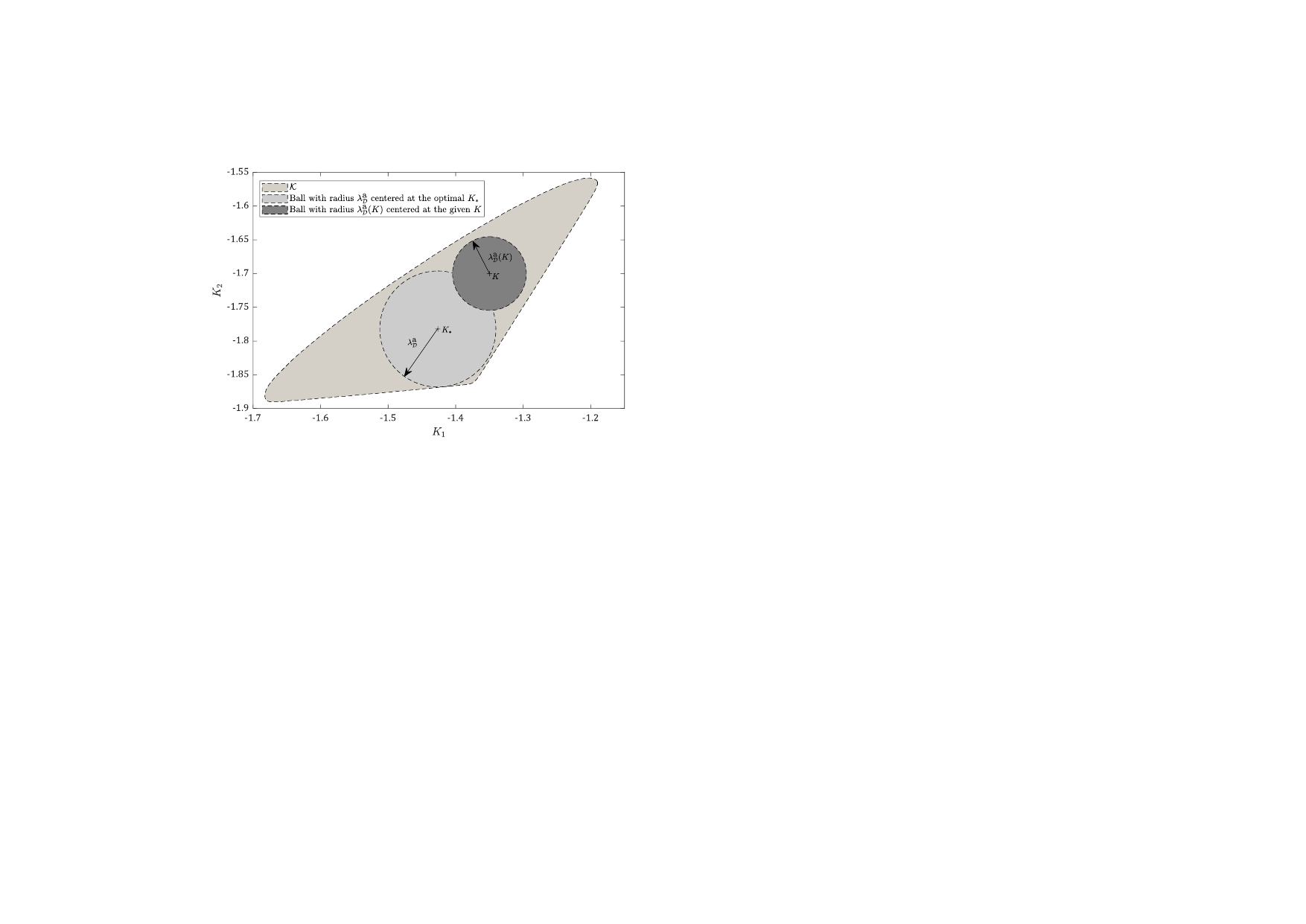}
        \caption{A visualization of the value of $\lambda_\mac{D}^\textup{a}(K)$ for a certain $K$, and the value of $\lambda_\mac{D}^\textup{a}$ with the optimal $K_*$ for Example~\ref{ex:3}.}
        \label{fig:addp_dda}
    \end{figure}
    \begin{figure}[h]
        \centering
        \includegraphics[width=0.9\columnwidth]{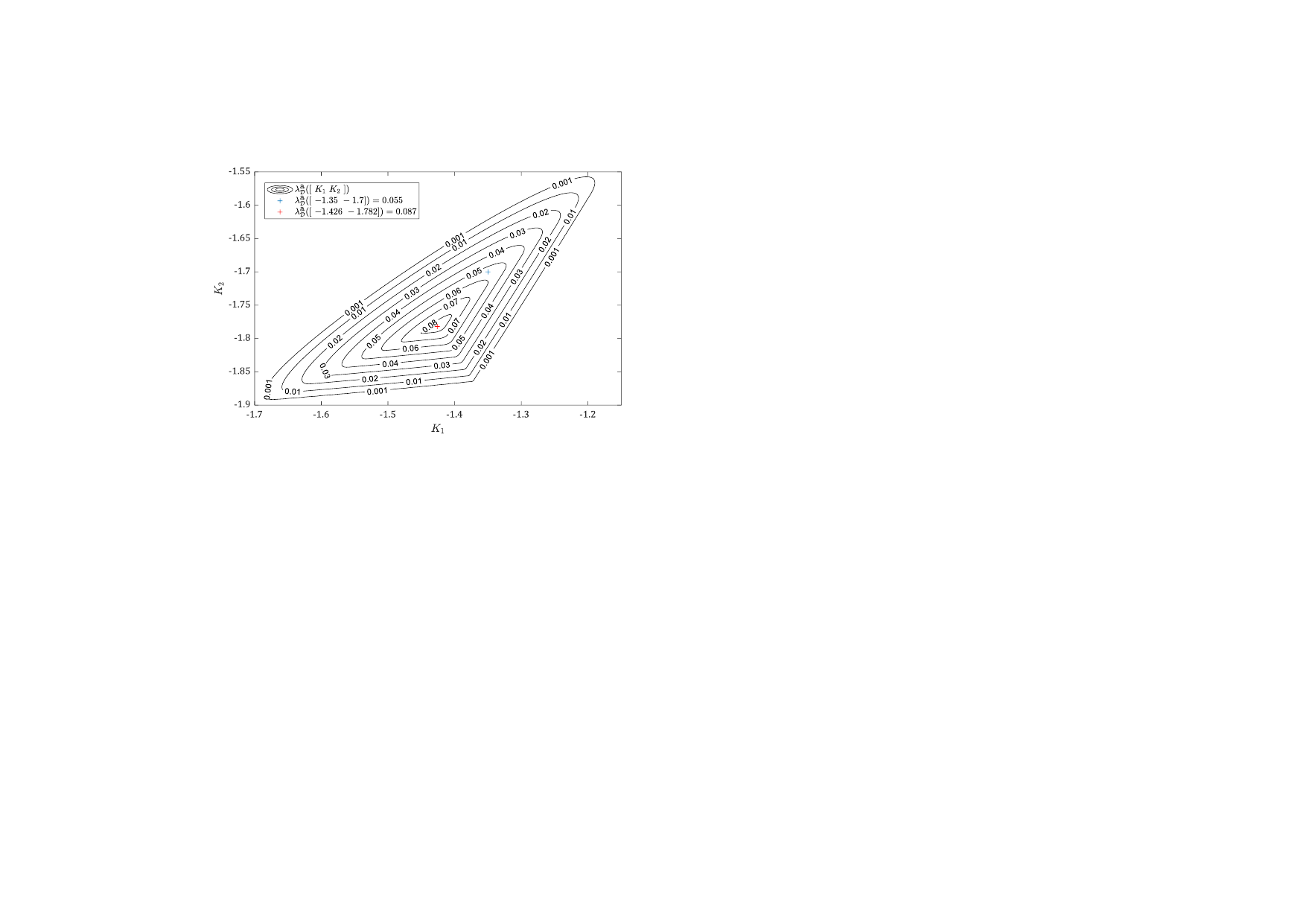}
        \caption{Contours of constant $\lambda_\mac{D}^\textup{a}(K)$ for Example~\ref{ex:3}.}
        \label{fig:addp_ddb}
\end{figure}

The following example discusses a more realistic case study compared to that of Example~\ref{ex:3}. 

\begin{example}
\label{ex:nontoy}
Consider the state-space model of a fighter aircraft \cite[Ex. 10.1.2]{skelton1997unified} as a benchmark example\footnote{The Matlab files for this example, including the data set, are available at \href{https://github.com/Yongzhang-Li/Nonfragile-Data-driven-Control}{https://github.com/Yongzhang-Li/Nonfragile-Data-driven-Control}.}. We discretize the continuous-time model with a sample time of $0.01$ to have
\begin{equation}
\begin{split}
A_\text{true}\!&=\!\begin{bmatrix}
    1.000 \!&\! -0.374 \!&\! -0.190 \!&\! -0.321 \!&\! 0.056  \!&\! -0.026 \\
    0.000 \!&\! 0.982  \!&\! 0.010  \!&\! -0.000 \!&\! -0.003 \!&\! 0.001  \\
    0.000 \!&\! 0.115  \!&\! 0.975  \!&\! -0.000 \!&\! -0.269 \!&\! 0.191  \\
    0.000 \!&\! 0.001  \!&\! 0.010  \!&\! 1.000  \!&\! -0.001 \!&\! 0.001  \\
    0.000 \!&\! 0.000  \!&\! 0.000  \!&\! 0.000  \!&\! 0.741  \!&\! 0.000  \\
    0.000 \!&\! 0.000  \!&\! 0.000  \!&\! 0.000  \!&\! 0.000  \!&\! 0.741
\end{bmatrix}\!,\\
B_\text{true}\!&=\!\begin{bmatrix}
    0.007  \!&\! 0.000 \!&\! -0.043 \!&\! 0.000 \!&\! 0.259 \!&\! 0.000 \\
    -0.003 \!&\! 0.000 \!&\! 0.030  \!&\! 0.000 \!&\! 0.000 \!&\! 0.259
\end{bmatrix}.
\end{split}
\end{equation}

We collect $T=500$ input and state data samples from this system. The data are generated starting from \begin{equation}
    x(0)=\begin{bmatrix}
    0.809 & -1.323 & 0.753 & 1.862 & -0.953 & 0.215
\end{bmatrix}^\top
\end{equation} with the input drawn at random from a zero-mean Gaussian distribution with unit variance. During this process, the entries of the noise samples are also drawn at random but from a uniform distribution between $-0.005/6$ and $0.005/6$. This noise model can be captured by \eqref{eq:ass1-1} with $\Phi_{11}=0.005^2T I_n$, $\Phi_{12}=\Phi_{21}^\top=0$, and $\Phi_{22}=-I_T$.

We first design a feedback gain, $K_o$, using the method provided in \cite{HenkQMI2023}. This can be done using Proposition~\ref{prop:DDC}, which yields
\begin{equation}
    K_o\!=\!\begin{bmatrix}
        -0.023 \!&\!  1.563 \!&\!  0.899 \!&\!  0.939 \!&\! -1.688 \!&\!  0.248 \\
         0.016 \!&\! -1.389 \!&\! -0.548 \!&\! -0.792 \!&\!	0.262 \!&\!	-1.523
    \end{bmatrix}.
\end{equation}
For this feedback gain, using Remark~\ref{reM:lam_dd_optK} we have $\lambda^\text{a}_\mathcal{D}(K_o)=0.026$. Now, we use Theorem~\ref{thm:add_lambda} to compute the least fragile feedback gain in the sense of measure \eqref{eq:def_lam_K} as
\begin{equation}
    K_*=\begin{bmatrix}
        -0.368 \!&\!  2.412 \!&\!  1.201 \!&\!  1.768 \!&\! -1.641 \!&\!  0.303 \\
         0.257 \!&\!	-1.927 \!&\! -0.770 \!&\! -1.384 \!&\!  0.325 \!&\! -1.436
    \end{bmatrix},
\end{equation}
which corresponds to $\lambda_\mathcal{D}^\text{a}=\lambda_\mathcal{D}^\text{a}(K_*)= 0.441$. 
Comparing the values of $\lambda^\text{a}_\mathcal{D}(K_o)$ and $\lambda^\text{a}_\mathcal{D}$, we see that although $K_o$ and $K_*$ are both quadratically stabilizing gains for the true system, $K_o$ is more sensitive to additive perturbations. For instance, consider a perturbation as
\begin{equation}
    \Delta = \begin{bmatrix}
      -0.010 \!&\! -0.052 \!&\!  0.099 \!&\!  0.012 \!&\!  0.036 \!&\! 0.058 \\
      -0.228 \!&\! -0.212 \!&\! -0.105 \!&\! -0.064 \!&\!	-0.071 \!&\! 0.087   
    \end{bmatrix},
\end{equation}
\end{example}
which satisfies $\lambda^\text{a}_\mathcal{D}(K_o)<\|\Delta\|=0.353 <\lambda_\mathcal{D}^\text{a}$. One can verify that $A_\text{true}+B_\text{true}(K_*+\Delta)$ is Schur. However, $A_\text{true}+B_\text{true}(K_o+\Delta)$ is not Schur as it has an eigenvalue equal to $1.016$.

\section{Conclusions}
\label{sec:V}

It has been shown that the fragility of a data-driven feedback gain can be quantified by means of a measure, and the least fragile data-driven feedback gain can be computed by solving a data-based SDP. In addition, it has been shown that extreme fragility and complete immunity of a data-driven feedback gain towards feedback perturbations can be fully characterized by conditions that only depend on input-state data and the noise model. In this work, we only focused on \emph{additive} perturbation on the control parameters. Another type of feedback perturbation that is relevant in practical applications of data-driven controllers is the \emph{multiplicative} one, which can capture the effect of various faults and failures. The study of this and other types of feedback perturbations is left as future work.

\section*{References}
\bibliographystyle{IEEEtran}
\bibliography{biblo}

\end{document}